\documentclass[mnsc,nonblindrev]{informs3_nojournal}

\usepackage{amsfonts,amssymb,mathrsfs,bm,graphicx,booktabs,caption,subcaption,placeins}
\usepackage[colorlinks,citecolor=black,linkcolor=black,urlcolor=black,bookmarks=false,hypertexnames=false]{hyperref}
\usepackage{natbib}
\usepackage{booktabs}
\usepackage{threeparttable}
\usepackage{multirow}
\bibpunct[, ]{(}{)}{,}{a}{}{,}
\def\bibfont{\small}
\def\bibsep{\smallskipamount}
\def\bibhang{24pt}

\OneAndAHalfSpacedXI
\TheoremsNumberedThrough
\EquationsNumberedThrough
\MANUSCRIPTNO{}

\newcommand{\A}{\mathcal A}
\newcommand{\R}{\mathcal R}

\newcommand{\sta}{\mathrm{sta}}

\begin{document}

\RUNAUTHOR{Elmachtoub and Shi}
\RUNTITLE{Tight Guarantees for Static Policies in Vehicle Sharing Networks}
\TITLE{Tight Guarantees for Static Policies in Vehicle Sharing Networks}

\ARTICLEAUTHORS{
\AUTHOR{Adam N. Elmachtoub}
\AFF{Department of Industrial Engineering and Operations Research and Data Science Institute, Columbia University, New York, NY 10027, \EMAIL{adam@ieor.columbia.edu}}
\AUTHOR{Jiaqi Shi}
\AFF{Department of Industrial Engineering and Operations Research and Data Science Institute, Columbia University, New York, NY 10027, \EMAIL{js5778@columbia.edu}}
}

\ABSTRACT{We study pricing in a vehicle sharing network with $N$ locations and $C$ vehicles, motivated by on-demand bike, scooter, and car sharing, as well as robotaxi and ride sharing services.  Customers request directed trips between locations, with travel times following general distributions, and leave without service if no vehicle is available. Pricing affects both demand for trips and the distribution of vehicles across locations. Computing an optimal dynamic pricing policy is challenging because the system state depends on the number of vehicles available at each location and in transit (including how long they have been traveling), motivating the use of simple static policies that offer a fixed price for each trip.

For any number of vehicles and locations, we establish a tight performance guarantee for static pricing relative to optimal dynamic pricing. The guarantee holds for any concave objective functions (including  revenue rate) and general travel time distributions. Our tight guarantee is bounded below by $ \frac{C}{C + \min \{ C -1, \sqrt{ CN} \} }$ (over 50\%), achieving the first universal, non-asymptotic constant factor guarantee. We also prove that the optimal static policy can be efficiently computed by solving a convex optimization problem. Finally, we extend our performance guarantee to the setting with supply redirection.

}

\KEYWORDS{vehicle sharing networks, reusable resources, static pricing, dynamic pricing, closed queueing networks }

\maketitle

\section{Introduction}\label{sec:introduction}
Vehicle sharing networks are prevalent in transportation systems, from bike sharing (Citi Bike), scooter sharing (Lime), and car sharing (Zipcar) to ride sharing (Uber, Lyft, and Didi) and emerging robotaxi services (Waymo and Pony.ai). Platforms operating these on-demand services face a fundamental challenge in matching supply with demand across locations. Differences in demand, together with randomness in trip requests and travel times, can leave some locations short of vehicles while others have idle vehicles. Dynamic pricing can help address this mismatch by adjusting trip prices in response to vehicle availability. However, pricing decisions across different locations are interdependent: changing demand for trips from one location affects future vehicle availability elsewhere in the network. Computing an optimal dynamic pricing policy requires accounting for these interactions over a large state space describing the number of vehicles at different locations and the amount of travel time incurred in ongoing trips, which is intractable due to the curse of dimensionality. This computational difficulty motivates a simple static pricing policy that offers a fixed price for each origin–destination trip at all times.


Previous work has demonstrated the effectiveness of static pricing relative to optimal dynamic pricing. For a network with $C$ vehicles and $N$ locations, \cite{waserhole2016pricing} establish a $ \frac{C}{C + N -1} $ guarantee for static pricing when travel times are zero. \cite{banerjee2022pricing} extend the analysis to nonzero travel times and obtain the guarantee $\left(1 - 2\sqrt{\frac{\log C}{C}}\right) \left(\frac{\sqrt{C\log C}}{\sqrt{C\log C} + N-1} - \frac{3}{\sqrt{C\log C}}\right) $ under additional conditions (including  $ C \geq 100 $). 
\cite{benjaafar2023pricing} subsequently obtain an improved guarantee of $ 1 + \frac{N-1}{2C}+\frac{1}{2(N+C)}-\sqrt{\left(\frac{N-1}{2C} + \frac{1}{2(N+C)}\right)^2 + \frac{N-1}{C}+\frac{1}{N+C}} $ for any $C$ and $N$. These results show that static pricing is powerful when the number of vehicles is large relative to the number of locations. In particular, all these guarantees approach one as $C$ grows for fixed $N$. 

However, these guarantees may be near zero when $N$  substantially exceeds $C$, leaving limited theoretical support for static pricing in practical settings such as new market launches, low-density suburban markets, and late nights with fewer drivers. For example, in emerging robotaxi services, Pony.ai reported over 10,000 pickup and drop-off locations in Shenzhen in November 2025, while its entire Robotaxi fleet comprised 961 vehicles \citep{pony2025fleet}. In ride sharing, the number of geographic zones (locations) can exceed the number of participating vehicles in low-density suburban markets, where Uber reports longer waits and pickup distances \citep{uber2024remarks}. During late-night operations, vehicle supply can decline substantially, as documented by SFCTA for Uber and Lyft trips \citep{sfcta2017tncstoday}. These settings motivate performance guarantees of static pricing that remain meaningful when $ N > C$. 

To this end, for any $C$ and $N$, we establish the tight $ \gamma_{C,N}:=\frac{\sum_{k=0}^{C-1} \frac{1}{k!} (C-\min\{N,C\})^k  \binom{\min\{N,C\} + C-k-2}{C-k-1}}{\sum_{k=0}^{C} \frac{1}{k!} (C-\min\{N,C\})^k  \binom{\min\{N,C\}+C-k-1}{C-k}} $ performance guarantee for static pricing compared to the optimal dynamic pricing (Theorem \ref{thm:main-guarantee}). This guarantee holds for any concave objective function (including revenue rate) and general travel time distributions. Our tight guarantee satisfies $\gamma_{C,N} \geq  \frac{C}{C + \min \{ C -1, \sqrt{ CN} \} } \geq \frac{1}{2}$, providing the first universal, non-asymptotic 50\% guarantee. With zero travel times, we establish the tight guarantee $ \frac{C}{C+\min\{C,N\}-1} $ (Theorem \ref{thm:zero-travel}). Our analysis does not rely on the flow-relaxation method used in \cite{waserhole2016pricing, banerjee2022pricing} and \cite{benjaafar2023pricing}, which imposes additional network balance constraints and equal availability of vehicles at each node. Rather, our approach directly constructs a static policy from the optimal dynamic pricing policy, which we ultimately preserves the same relative availability across locations. Allowing availability to vary across locations is valuable because some trips generate greater revenue, making it more beneficial to keep vehicles available where these higher-value trips begin. We also show that the optimal static policy can be efficiently computed by solving one convex optimization problem (Theorem \ref{thm:convex-static-program}). Finally, we extend our $ \gamma_{C,N} $  guarantee for static policies to settings with supply redirection (Theorem \ref{thm:supply-redirection}). Together, these results show both the efficacy and the computational tractability of static policies for vehicle sharing networks.

\subsection{Related Literature}\label{sec:literature} 
Beyond the static pricing guarantees in vehicle sharing networks discussed above, related research examines dynamic pricing, strategic drivers, and supply redirection. \cite{balseiro2021dynamic} develop a Lagrangian-based dynamic pricing policy for hub-and-spoke networks, establishing asymptotic optimality as the number of spokes and vehicles grow proportionally. \cite{chen2024real} consider a finite horizon with nonstationary demand and deterministic travel times, developing a dynamic pricing policy that corrects accumulated demand fluctuations and improves the asymptotic revenue-loss rate relative to state-independent pricing. Other approaches to queueing network control include internal pricing for admission decisions in closed logistics networks \citep{adelman2007price} and mirror backpressure policies that use queue lengths to manage supply under unknown, potentially time-varying demand rates \citep{kanoria2024blind}. Research on strategic drivers examines pricing and compensation with endogenous joining and participation \citep{cachon2017role}, spatial price discrimination with endogenous entry and relocation \citep{bimpikis2019spatial}, and spatial pricing with costly driver relocation and fixed total supply \citep{besbes2021surge}. \cite{afeche2023ride} examine admission and repositioning control in a two-location fluid model with strategic drivers. For supply redirection and repositioning, \cite{braverman2019empty} establish the asymptotic optimality of a fluid-based static randomized routing policy as demand and fleet size grow, \cite{benjaafar2022dynamic} characterize a no-repositioning region and develop a convergent approximate dynamic programming algorithm for periodic repositioning with continuous inventory, and \cite{hosseini2025dynamic} develop a fluid-based dynamic relocation policy for car-sharing networks.

Our work also relates to pricing for reusable resources in Erlang loss systems, with the one-class customer corresponding to the special case $N=1$ of our model. \cite{besbes2022static} show that static pricing achieves at least $78.9\%$ of the optimal dynamic pricing revenue under regular valuation distributions and exponential service times. \cite{elmachtoub2023power} extend the $78.9\%$ guarantee to multiple customer classes and general service time distributions, and establish a $97.5\%$ guarantee relative to the optimal inventory-based policy for one customer class with a monotone hazard rate valuation distribution. \cite{balseiro2026dynamic} show that policies using only two prices can achieve faster convergence to the fluid upper bound than static pricing as the number of units grows. Other studies investigate asymptotically optimal pricing, admission control, and pricing with advance reservations \citep{Paschalidis2000, levi2010provably, lei2020real}. 


\section{Model and Preliminaries}\label{sec:model}
We consider a network with $N$ locations, indexed by $[N]=\{1,\ldots,N\}$, and a directed arc set $\A\subseteq[N] \times[N]$. The locations can represent geographic zones in ride hailing such as Uber, docking stations in bike sharing such as Citi Bike, rental locations in car rental systems, or pick-up and drop-off locations in robotaxi platforms. Each arc $(i,j)\in\A$ represents a feasible customer trip from location $i$ (origin) to location $j$ (destination). We allow $ (i,i) \in \A $, representing self-loop trips. There are $C$ vehicles moving on this network. Potential customers requesting trip $(i,j)\in\A$ arrive according to an independent Poisson process with rate $\Lambda_{ij}$. Each $(i,j)$ customer has an i.i.d. valuation for the trip drawn from a known distribution $F_{ij}$. Upon arrival, we call an $(i,j)$ customer willing if their valuation exceeds the current price $ p^{ij} $.
The travel time of the trip from location $i$ to location $j$ is drawn i.i.d. from a general nonnegative distribution $H_{ij}$ with finite mean $ 1/\mu^{ij} $. We assume travel times are independent across trips and independent of arrivals and valuations. After completing a trip, the vehicle becomes idle at the destination. Due to the on-demand needs, customers who arrive when no vehicle is available at their origin leave immediately without service.

As is common with pricing, it is easier to work in the quantile space. We let $ \lambda^{ij}: = \Lambda_{ij} ( 1 - F_{ij}(p^{ij}) )  $ denote the effective arrival rate, i.e., the arrival rate of willing $(i,j)$ customers under price $ p^{ij} $. We assume that $F_{ij}$ is supported on $ [ \underline{v}_{ij}, \bar{v}_{ij} ]$ where $\bar{v}_{ij} $ can be infinity, and has a continuous positive density function $f_{ij} $ on $ ( \underline{v}_{ij}, \bar{v}_{ij} ) $. Then, for any $ \lambda^{ij} \in (0, \Lambda^{ij} ) $, there is a one-to-one correspondence between effective arrival rates and prices, denoted by $ p^{ij}(\lambda^{ij}) $. We further define $p^{ij}(0) = \bar{v}_{ij} $ and $ p^{ij}( \Lambda_{ij} ) = \underline{v}_{ij} $. Let $r^{ij} (\lambda) = \lambda \cdot p^{ij}(\lambda)  $ denote the revenue rate function. When $ \bar{v}_{ij} = \infty $, we work with the continuous closure of the revenue rate function and define $r^{ij}(0):=\lim_{\lambda \to 0} r^{ij}(\lambda) $. For any $(i,j) \in \A $, we assume that $r^{ij}(\cdot)$ is concave, a standard assumption in the literature. Given a pricing policy, the above system can be viewed as a closed queueing network from vehicles' perspective with two types of queues: (i) a single-server FCFS queue at each location whose exponential service completion is the arrival of a willing customer (ii) an infinite-server queue (transit queue) for each arc with general service times ($H_{ij}$) that represents vehicles carrying customers from the origin to the destination.

Let $x_i$ denote the number of idle vehicles at location $i$ and $x_{ij}$ be the number of vehicles traveling on arc $(i,j)$. We use $  \mathcal X_{C,N} :=\left\{(x_i:i\in[N],x_{ij}:(i,j)\in\A)\in\mathbb Z_+^{N+|\A|}: \sum_{i=1}^N x_i + \sum_{(i,j) \in\A} x_{ij} = C\right\} $ to denote the count space. As we consider general travel times without the memoryless property in this paper, $\mathcal{X}_{C,N}$ itself is not exactly the system state space. To make the system Markovian, the system space needs to further track the travel time spent so far for every busy vehicle. Specifically, given $x_{ij}$, we define $ \bm{y}^{ij} = (y_1^{ij},\ldots,y_{x_{ij}}^{ij} ) $ with $ y_k^{ij} $ representing the travel time spent so far of vehicle moving from $i$ to $j$ with label $k$. The system space is then defined as
\begin{align*}
    \mathcal S_{C,N} = 
    \bigcup_{\bm x \in \mathcal{X}_{C,N} } \left\{ \left(\bm x, \bm y^{ij}, (i,j) \in \A \right) : \bm{y}^{ij} \in \mathbb{R}_+^{x_{ij}}, \ \forall (i,j)\in \A   \right\} =: \bigcup_{\bm x \in \mathcal{X}_{C,N} }
    \mathcal S_{C,N}(\bm x).
\end{align*}
A dynamic pricing policy offers price $p^{ij}_{\bm s}$ when an $(i,j)$ customer arrives to the system in state $\bm s \in \mathcal{S}_{C,N}$, which induces an effective arrival rate $ \lambda^{ij}_{\bm s} $.  A static pricing policy offers price $p^{ij}$ independent of the state for all $ (i,j) $ customers, which induces an effective arrival rate $ \lambda^{ij} $. Throughout the paper, we will call $\lambda$ the effective arrival rate and pricing policy interchangeably.

Let $ \bm \lambda =  ( \lambda^{ij}_{\bm s} )_{ (i,j) \in \A, \bm s \in \mathcal{S}_{C,N} } $ denote some arbitrary pricing policy and $\Pi^{\bm \lambda} $ be its corresponding stationary probability measure over $ \mathcal{S}_{C,N}$. For any $ \bm x \in \mathcal{X}_{C,N}  $, we denote the steady-state probability of being in count state $\bm x$ by $ \mathbb{P}_{\bm x} (\bm \lambda)  = \Pi^{\bm \lambda } ( \mathcal{S}_{C,N}(\bm x) ) $. Our performance metric is the long-run average revenue rate (our results hold for any concave performance metrics such as throughput, service level, and social welfare noted in \citealp{banerjee2022pricing}), defined as
\begin{align}\label{eq:dynamic-revenue}
\mathcal{R}(\bm\lambda)=\sum_{(i,j)\in \A}\int_{\mathcal S_{C,N}}  \mathbf 1\{x_i\geq1\} \; r^{ij}(\lambda_{\bm s}^{ij}) \ d \Pi^{\bm\lambda}( \bm{s}).
\end{align}
For each $(i,j)\in\mathcal A$, we define the throughput (the long-run average rate at which willing $(i,j)$ customers are served) as $v_{ij}(\bm \lambda):=\int_{\mathcal S_{C,N}}\mathbf 1\{x_i\geq 1\}\lambda_{\bm s}^{ij} d \Pi^{\bm \lambda}( \bm s)$. 
We denote by $\bm\lambda^*$ and  $\bm\lambda^{sta*}$ an optimal dynamic and static policy, respectively, and let $\mathcal{R}^*$ and $ \mathcal{R}^{sta*} $ be their corresponding long-run average revenue rates. 
We assume that the system induced by $\bm\lambda^*$ is ergodic.  We define the induced network as the network consisting of arcs with $v_{ij}(\bm\lambda^*)>0$ and assume that it is strongly connected.

Obviously, computing the optimal dynamic pricing policy is challenging since the state space $\mathcal{S}_{C,N}$ is infinite. This motivates us to consider static pricing that offers a fixed price for each trip $ (i,j) \in \A $. Let $ \Omega_{C,N}: = \{ (C,N, \Lambda_{ij}, H_{ij}, r_{ij}(\cdot), (i,j) \in \A ): \Lambda_{ij} > 0, H_{ij} \text{ is nonnegative} , r^{ij}(\cdot) \text{ is concave}   \} $ denote the family of instances satisfying the above assumptions. Our goal in this work is to establish the universal guarantee of static pricing compared to the optimal dynamic pricing policy over the instances in  $ \Omega_{C,N} $, i.e.,
\begin{align*}
  \inf_{\Omega_{C,N}} \frac{\mathcal{R}^{sta*}}{\mathcal{R}^{*}}
\end{align*}
When $C=1$, we note that static and dynamic pricing are equivalent because trips are accepted at each location only when the vehicle is idle at that location. Thus $\R^{sta*}=\R^{*}$ for $C=1$. We therefore focus on the nontrivial case $C\geq2$ in the remainder of the paper.


\subsection{Product-form Queueing Network under a Static Policy}\label{sec:queueing-preliminaries}


For a static pricing policy $\bm \lambda = (\lambda^{ij})_{(i,j) \in \A }$ that induces a strongly connected network, let $\lambda_i = \sum_{j:(i,j) \in \A} \lambda^{ij} $ denote the total effective arrival rate of trips whose origin is location $i$. Under the static policy, the induced network belongs to the classical closed BCMP network \citep{baskett1975open} whose steady-state probabilities admit the celebrated product-form expressions. 
Let $ w_i $, $i \in [N]$, be a solution to the following traffic equations of the closed network:
\begin{align}\label{eq: traffic}
    w_i  = \sum_{k : (k,i) \in \A } w_k \frac{\lambda^{ki}}{ \lambda_k }, \quad i \in [N].
\end{align}
Then, as shown in \cite{baskett1975open}, the steady-state probability $ \mathbb P_{\bm x}(\bm\lambda) $ under static policy $\bm \lambda = (\lambda^{ij})_{(i,j) \in \A }$ is given by
\begin{align}\label{eq:product-form}
\mathbb P_{\bm x}(\bm\lambda) = \frac{\prod_{i=1}^N \left(\frac{w_i }{\lambda_i} \right)^{x_i} \prod_{(i,j)\in\A} \frac{1}{x_{ij} ! } \left(\frac{w_i \lambda^{ij} }{\lambda_i \mu^{ij}}\right)^{x_{ij}}  }{\sum_{\bm x' \in \mathcal X_{C,N}}\prod_{i=1}^N \left(\frac{w_i }{\lambda_i} \right)^{x'_i} \prod_{(i,j)\in\A} \frac{1}{x'_{ij} !} \left(\frac{w_i \lambda^{ij} }{\lambda_i \mu^{ij}}\right)^{x'_{ij}} }
,\qquad \bm x\in\mathcal X_{C,N}.
\end{align}

The above expression holds for general travel times by the insensitivity property \citep{barbour1976networks,whitt1980continuity,taylor2010insensitivity}, under which the steady-state probability depends on the travel time distributions only through their means ($1/\mu^{ij}$).

 

\section{The Performance of Static Pricing}\label{sec:main-result}


Our approach is to choose a static pricing policy instead of focusing on the optimal static pricing policy. Unlike previous work that uses a flow-based policy obtained by solving a convex optimization problem, we directly construct a static policy from the optimal dynamic policy. 
Let $ \alpha_i( \bm \lambda^* ):=\sum_{\bm x \in\mathcal X_{C,N}:x_i\geq1}\mathbb P_{\bm x}(\bm\lambda^*) = \int_{\mathcal S_{C,N}} \mathbf 1\{x_i\geq 1\} \; d \Pi^{\bm\lambda^*} \! ( \bm s )   $ denote the availability (service level) at location $i$ and $v_{ij}(\bm\lambda^*)=\int_{\mathcal S_{C,N}} \mathbf 1\{x_i\geq 1\}\lambda_{\bm s}^{ij*} \; d \Pi^{\bm\lambda^*} \! ( \bm s )$ be the throughput under the optimal dynamic policy $ \bm{\lambda}^* $. For each $(i,j) \in \A$, our constructed static policy, $\tilde\lambda^{ij}$, is defined as the expected effective arrival rates conditional on at least one vehicle being available at location $i$, i.e.,
\begin{align}\label{eq:constructed-policy}
\tilde\lambda^{ij} =\frac{\int_{\mathcal S_{C,N}} \mathbf 1\{x_i\geq 1\}\lambda_{\bm s}^{ij*} \; d \Pi^{\bm\lambda^*}\! (\bm s)}{\sum_{\bm x\in \mathcal X_{C,N}:x_i\geq1}\mathbb P_{\bm x}(\bm\lambda^*)} = 
\frac{v_{ij}(\bm\lambda^*)}{\alpha_i(\bm\lambda^*)}.
\end{align}
Since we assume the network induced by $ \bm \lambda^* $ is strongly connected, $ \alpha_i(\bm \lambda^*)>0 $, which implies that the network induced by the constructed static policy is also strongly connected. The construction of our static policy exploits the fact that revenue is only generated at a location when vehicles are available, which is a key characteristic of a loss system. Let $ \tilde{\bm \lambda} = (\tilde{\lambda}^{ij})_{(i,j) \in \A} $ denote this static policy and let $\R^{\tilde{\bm \lambda}}$ be its long-run average revenue rate, and $ \alpha_i( \tilde{\bm \lambda}) $ be the corresponding availability at location $ i \in [N] $. A key property of the constructed static policy is that the availability ratios $\alpha_i(\tilde{\bm\lambda})/\alpha_i(\bm\lambda^*)$ are the same across all locations. The following lemma shows that we can use $\tilde{\bm\lambda}$ to lower bound the revenue ratio $\R^{sta*}/\R^*$ by this common availability ratio.

\begin{lemma}\label{lem: RevRatio_lower}
    The constructed static policy satisfies
\begin{align}\label{eq: rev-ratio-bound}
\frac{\R^{sta*}}{\R^*} \geq 
\frac{\R^{\tilde{\bm\lambda}}}{\R^{*}}
\geq  \frac{ \sum_{(i,j)\in\A} \alpha_i(\bm{\tilde \lambda} ) r^{ij}( \tilde{\lambda}^{ij})  }{\sum_{(i,j)\in\A}\alpha_i(\bm\lambda^*)r^{ij}(\tilde\lambda^{ij}) }
= \frac{\sum_{\bm x \in \mathcal X_{C-1,N}}\prod_{i=1}^N \left( \alpha_i (\bm \lambda^*)  \right)^{x_i} \prod_{(i,j)\in\A} \frac{1}{x_{ij}! }  \left(\frac{v_{ij}(\bm{\lambda}^*) }{\mu^{ij}} \right)^{x_{ij}} }{\sum_{\bm x \in \mathcal X_{C,N}}\prod_{i=1}^N \left( \alpha_i (\bm \lambda^*)  \right)^{x_i} \prod_{(i,j)\in\A}\frac{1}{x_{ij}! }  \left(\frac{v_{ij}(\bm{\lambda}^*) }{\mu^{ij}} \right)^{x_{ij}} } .
\end{align}
\end{lemma}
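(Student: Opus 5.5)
The proof has three parts: the first inequality is immediate, the second comes from Jensen's inequality, and the closing equality comes from the product form \eqref{eq:product-form}.

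\textbf{First inequality.} $\tilde{\bm\lambda}$ is a feasible static policy, so $\R^{sta*}\ge \R^{\tilde{\bm\lambda}}$.

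\textbf{Second inequality.} Under a static policy the revenue is $\R^{\tilde{\bm\lambda}}=\sum_{(i,j)\in\A}\alpha_i(\tilde{\bm\lambda})\, r^{ij}(\tilde\lambda^{ij})$, which is the numerator. For the denominator I bound $\R^*$ by Jensen's inequality. Fix $(i,j)$ and consider the probability measure $\Pi^{\bm\lambda^*}$ conditioned on $\{x_i\ge1\}$, which has mass $\alpha_i(\bm\lambda^*)>0$. Since $r^{ij}$ is concave,
\[
\int \mathbf 1\{x_i\ge1\}\, r^{ij}(\lambda^{ij*}_{\bm s})\, d\Pi^{\bm\lambda^*}
= \alpha_i(\bm\lambda^*)\, \mathbb E\big[r^{ij}(\lambda^{ij*}_{\bm s}) \,\big|\, x_i\ge1\big]
\le \alpha_i(\bm\lambda^*)\, r^{ij}(\tilde\lambda^{ij}),
\]
because the conditional mean of $\lambda^{ij*}_{\bm s}$ is exactly $\tilde\lambda^{ij}$ by \eqref{eq:constructed-policy}. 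Summing over arcs gives $\R^*\le\sum_{(i,j)\in\A}\alpha_i(\bm\lambda^*)\,r^{ij}(\tilde\lambda^{ij})$.

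\textbf{Equality: common availability ratio.} Write $\alpha_i:=\alpha_i(\bm\lambda^*)$ and $v_{ij}:=v_{ij}(\bm\lambda^*)$. I claim $w_i=\alpha_i\tilde\lambda_i=\sum_j v_{ij}$ solves the traffic equations \eqref{eq: traffic} for $\tilde{\bm\lambda}$. Substituting $w_k\tilde\lambda^{ki}/\tilde\lambda_k = v_{ki}$, the traffic equations reduce to flow balance, $\sum_j v_{ij}=\sum_k v_{ki}$. I expect this flow-balance step to be the main technical point. I would justify it from stationarity of the ergodic process under $\bm\lambda^*$: the long-run rate at which vehicles leave $i$ equals the rate at which they arrive at $i$. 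Each vehicle leaving $i$ eventually completes a trip with finite mean travel time, so the completion rate on arc $(k,i)$ equals $v_{ki}$. Given flow balance, the product form yields
\[
\frac{w_i}{\tilde\lambda_i}=\alpha_i, \qquad \frac{w_i\tilde\lambda^{ij}}{\tilde\lambda_i\mu^{ij}}=\frac{v_{ij}}{\mu^{ij}}.
\]
Hence $\mathbb P_{\bm x}(\tilde{\bm\lambda})\propto \prod_i \alpha_i^{x_i}\prod_{(i,j)\in\A} (v_{ij}/\mu^{ij})^{x_{ij}}/x_{ij}!$ over $\mathcal X_{C,N}$. Let $G_C$ denote this normalizing sum, i.e.\ the denominator in \eqref{eq: rev-ratio-bound}.

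\textbf{Equality: the ratio.} The states with $x_i\ge1$ are in bijection with $\mathcal X_{C-1,N}$ via $x_i\mapsto x_i-1$. Under this map each weight gains exactly one factor $\alpha_i$, so $\alpha_i(\tilde{\bm\lambda})=\alpha_i\,G_{C-1}/G_C$, where $G_{C-1}$ is the numerator sum in \eqref{eq: rev-ratio-bound}. The ratio $\alpha_i(\tilde{\bm\lambda})/\alpha_i(\bm\lambda^*)=G_{C-1}/G_C$ is therefore the same for every $i$. Factoring it out of the numerator sum gives the stated equality, provided the denominator $\sum_{(i,j)}\alpha_i r^{ij}(\tilde\lambda^{ij})$ is positive, which holds when $\R^*>0$.
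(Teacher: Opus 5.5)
Your proposal is correct and follows essentially the same route as the paper. Both use Jensen's inequality on $\Pi^{\bm\lambda^*}$ conditioned on $\{x_i\ge1\}$ to bound $\R^*$, then flow balance under $\bm\lambda^*$ to show that $w_i=\tilde\lambda_i\alpha_i(\bm\lambda^*)$ solves the traffic equations, and finally the product form with the shift $\bm x\mapsto\bm x-\bm e_i$ to get the common ratio $\alpha_i(\tilde{\bm\lambda})/\alpha_i(\bm\lambda^*)=G_{C-1}/G_C$; your stationarity justification of flow balance and the remark that the final division needs $\R^*>0$ are minor clarifications the paper leaves implicit.
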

\begin{proof}{Proof.}
According to the definitions of $ \R^{\tilde{\bm\lambda}} $ and $ \R^{*}$,
\begin{align}\label{eq:lower-rev-ratio }
  \frac{\R^{\tilde{\bm\lambda}}}{\R^{*}} = \frac{ \sum_{(i,j)\in\A} \alpha_i(\bm{\tilde \lambda} ) r^{ij}( \tilde{\lambda}^{ij})  }{\sum_{(i,j)\in\A} \int_{\mathcal S_{C,N}}\mathbf 1\{x_i\geq1\} r^{ij}(\lambda_{\bm s}^{ij*}) \; d \Pi^{\bm\lambda^*} \! ( \bm{s})} \notag
  \geq& \frac{ \sum_{(i,j)\in\A} \alpha_i(\bm{\tilde \lambda} ) r^{ij}( \tilde{\lambda}^{ij})  }{ \sum_{(i,j)\in\A} \alpha_i(\bm\lambda^*) r^{ij}\left(\frac{\int_{\mathcal S_{C,N}}\mathbf 1\{x_i\geq1\} \lambda_{\bm s}^{ij*} \; d \Pi^{\bm{\lambda}^*}\! (\bm{s})}{\alpha_i(\bm\lambda^*)} \right) } \notag \\
  =&  \frac{ \sum_{(i,j)\in\A} \alpha_i(\bm{\tilde \lambda} ) r^{ij}( \tilde{\lambda}^{ij})  }{\sum_{(i,j)\in\A}\alpha_i(\bm\lambda^*)r^{ij}(\tilde\lambda^{ij}) },
\end{align}
where the inequality follows from Jensen's inequality applied to the concave function $r^{ij}(\cdot)$ and the second equality follows from our constructed static policy \eqref{eq:constructed-policy}.  

Next, we observe a core property that there is a common ratio $\alpha$ such that $ \frac{\alpha_i(\tilde{\bm\lambda})}{\alpha_i(\bm\lambda^*)} = \alpha $ across all locations. Note that in steady state, the rate at which vehicles flow into a location equals the rate at which they flow out of that location under the optimal dynamic policy,
\begin{align*}
v_i (\bm\lambda^*): =  \sum_{j:(i,j)\in\A}v_{ij}(\bm\lambda^*) = \sum_{k:(k,i)\in\A} v_{ki}(\bm\lambda^*), \quad i\in[N].
\end{align*}
Let $ \tilde{\lambda}_i = \sum_{j:(i,j) \in \A} \tilde{\lambda}^{ij}  $. From the derived equality above and the fact that $ v_{ij} (\bm \lambda^*) = \tilde{\lambda}^{ij} \alpha_i (\bm{\lambda}^* ) $ from our construction \eqref{eq:constructed-policy}, we have that
\begin{align*}
\tilde{\lambda}_i \alpha_i (\bm \lambda^*) = \sum_{k:(k,i)\in\A}  \tilde{\lambda}^{ki} \alpha_k(\bm \lambda^*) 
= \sum_{k:(k,i)\in\A} \tilde{\lambda}_k \alpha_k(\bm \lambda^*) \frac{\tilde{\lambda}^{ki} }{\tilde{\lambda}_k}   , \quad i\in[N].
\end{align*}
Thus, $ \tilde{\lambda}_i \alpha_i (\bm \lambda^*) $, $i \in [N] $, is a solution to the traffic equations \eqref{eq: traffic}. For any $ i \in [N] $, applying the steady-state probability \eqref{eq:product-form} under the constructed policy $\tilde{\bm{\lambda} }$ gives the availability expression:
\begin{align*}
    \alpha_i (\tilde{\bm{\lambda} } ) = \sum_{ \bm x \in \mathcal{X}_{C,N}: x_i \geq 1} \mathbb{P}_{\bm x} (\tilde{\bm{\lambda} }) =&  
    \frac{\sum_{ \bm x \in \mathcal{X}_{C,N}: x_i \geq 1} \prod_{i=1}^N \left( \alpha_i (\bm \lambda^*)  \right)^{x_i} \prod_{(i,j)\in\A} \frac{1}{x_{ij}! }  \left( v_{ij}(\bm{\lambda}^*)/ \mu^{ij} \right)^{x_{ij}}  }{\sum_{\bm x \in \mathcal X_{C,N}}\prod_{i=1}^N \left( \alpha_i (\bm \lambda^*)  \right)^{x_i} \prod_{(i,j)\in\A} \frac{1}{x_{ij}! }  \left(v_{ij}(\bm{\lambda}^*)/\mu^{ij} \right)^{x_{ij}} } \\ 
    =& \alpha_i(\bm \lambda^* ) \frac{\sum_{ \bm x \in \mathcal{X}_{C-1,N}} \prod_{i=1}^N \left( \alpha_i (\bm \lambda^*)  \right)^{x_i} \prod_{(i,j)\in\A} \frac{1}{x_{ij}! }  \left( v_{ij}(\bm{\lambda}^*) / \mu^{ij} \right)^{x_{ij}}  }{\sum_{\bm x \in \mathcal X_{C,N}}\prod_{i=1}^N \left( \alpha_i (\bm \lambda^*)  \right)^{x_i} \prod_{(i,j)\in\A} \frac{1}{x_{ij}! }  \left( v_{ij}(\bm{\lambda}^*) / \mu^{ij} \right)^{x_{ij}} },
\end{align*}
where the first equality follows from the definition of availability, the second equality follows from \eqref{eq:product-form} with $ w_i = \tilde{\lambda}_i \alpha_i (\bm \lambda^*)$ and $ v_{ij} (\bm \lambda^*) = \tilde{\lambda}^{ij} \alpha_i (\bm{\lambda}^* ) $, and the third equality follows from extracting $\alpha_i (\bm \lambda^* )$ from the numerator and the definition of $\mathcal{X}_{C-1,N}$. Combining the above equality, \eqref{eq:lower-rev-ratio }, and $\R^{sta*}\geq \R^{\tilde{\bm\lambda}}$ implies the desired results of \eqref{eq: rev-ratio-bound}.
\hfill \Halmos  


\end{proof}

Lemma~\ref{lem: RevRatio_lower} generalizes similar lemmas in \cite{besbes2022static} and \cite{elmachtoub2023power} that study the special case with $N = 1$. We remark that the common availability ratio property is a new observation and is inherent to the network setting considered in this paper. Leveraging Lemma~\ref{lem: RevRatio_lower}, we next analyze this common availability ratio and characterize the exact worst-case performance of static pricing. 

\begin{theorem}\label{thm:main-guarantee}
 Let $ \gamma_{C,N}: =  \frac{\sum_{k=0}^{C-1} \frac{1}{k!} (C-\min\{N,C\})^k \binom{\min\{N,C\}+C-k-2}{C-k-1}}{\sum_{k=0}^{C}\frac{1}{k!} (C-\min\{N,C\})^k\binom{\min\{N,C\}+C-k-1}{C-k}}  $. For static policy $ \tilde{\bm{\lambda} } $,
\begin{align*}
 \inf_{\Omega_{C,N}}   \frac{\R^{sta*} }{\R^{*}}  =  \inf_{\Omega_{C,N}}  \frac{\R^{\tilde{\bm\lambda}}}{\R^{*}} =    \gamma_{C,N}  \geq 
 \frac{C}{C + \min\{ C-1, \sqrt{CN} \} } \geq \frac{1}{2}.
\end{align*}
\end{theorem}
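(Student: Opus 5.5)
The plan is to turn the right-hand side of Lemma~\ref{lem: RevRatio_lower} into a finite-dimensional extremal problem, solve it exactly, and then show the extremal value is attained by a family of instances. Write $a_i=\alpha_i(\bm\lambda^*)\in(0,1]$ and $b_{ij}=v_{ij}(\bm\lambda^*)/\mu^{ij}\geq 0$, and set $b=\sum_{(i,j)\in\A} b_{ij}$. Summing the multinomial factors over the transit coordinates, the normalizing sum $G_C:=\sum_{\bm x\in\mathcal X_{C,N}}\prod_i a_i^{x_i}\prod_{(i,j)}b_{ij}^{x_{ij}}/x_{ij}!$ equals the coefficient of $z^C$ in
\begin{align*}
\exp(bz)\prod_{i=1}^N (1-a_i z)^{-1}.
\end{align*}
The bound in \eqref{eq: rev-ratio-bound} is then exactly $G_{C-1}/G_C$. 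The only constraint I would impose on $(\bm a,b)$ comes from counting vehicles under $\bm\lambda^*$. We have $\mathbf 1\{x_i\geq1\}\le x_i$, and by Little's law the expected number of vehicles in transit on arc $(i,j)$ equals $v_{ij}(\bm\lambda^*)/\mu^{ij}$. Together these give
\begin{align*}
\sum_{i=1}^N a_i + b \le C, \qquad 0<a_i\le 1 .
\end{align*}
The lower bound therefore reduces to minimizing $G_{C-1}/G_C$ over this polytope.

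The core step, and the one I expect to be the main obstacle, is to show that the minimum is attained at $a_i=1$ for $n:=\min\{N,C\}$ locations, $a_i=0$ for the remaining locations, and $b=C-n$. At that point, expanding the product gives $G_C=\sum_{k=0}^{C}\frac{b^k}{k!}\binom{n+C-k-1}{C-k}$, which is exactly the denominator of $\gamma_{C,N}$; the numerator follows in the same way with $C-1$ in place of $C$. The argument would go in three moves.
\begin{itemize}
\item \textbf{Push the budget constraint to equality.} I would verify that $G_{C-1}/G_C$ is nonincreasing in each $a_i$ and in $b$. This holds because $\partial_{a_i}\log G_C=G^{(i)}_{C-1}/G_C$, where $G^{(i)}$ is the generating function with an extra factor $(1-a_iz)^{-1}$. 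The resulting comparison is a log-concavity (Newton/Turán-type) inequality for the coefficients $G_k$, which I would prove from the real-rootedness of the factors or by a direct coupling argument.
\item \textbf{Push the $a_i$ to extreme values.} On the face $\sum_i a_i+b=C$, I would argue that the ratio is Schur-concave in $\bm a$, so an optimum puts each $a_i\in\{0,1\}$ except for at most one fractional coordinate.
\item \textbf{Trade idle mass against transit mass.} I would show that moving mass from $b$ into an $a_i$ (while $a_i<1$ and fewer than $C$ locations are active) decreases the ratio. The reason is that geometric factors $(1-z)^{-1}$ spread probability more than Poisson factors, which should again follow by comparing coefficients.
\end{itemize}
If the Schur step is awkward, my fallback is a Lagrangian/KKT analysis on the two-parameter family ($m$ active locations sharing a common value $a$, plus $b$), which I would then optimize by hand.

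For tightness, I would build instances in which $\bm\lambda^*$ nearly attains the extremal point. Take $n$ locations arranged in a cycle with huge $\Lambda_{ij}$ and revenue concentrated near a threshold, so that the optimal dynamic policy keeps exactly one vehicle idle at each location almost always. Add long deterministic trips that hold the remaining $C-n$ vehicles in transit. In such an instance the Jensen step and the inequality $\mathbf 1\{x_i\ge1\}\le x_i$ are both nearly tight. Since the static policy cannot coordinate vehicles in this way, $\R^{sta*}/\R^*\to\gamma_{C,N}$, which gives both equalities in the theorem.

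Finally, the explicit bounds. The bound $\gamma_{C,N}\ge C/(2C-1)$ (using $n\le C$) and the bound $\gamma_{C,N}\ge C/(C+\sqrt{CN})$ would both come from estimating the ratio of consecutive coefficients. I would write $G_C/G_{C-1}=1+(\text{correction})$ and bound the correction by $\min\{C-1,\sqrt{Cn}\}/C$, via Cauchy--Schwarz on the Poisson--negative-binomial convolution weights. The inequality $\ge 1/2$ is then immediate.
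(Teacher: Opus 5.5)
\textbf{The lower bound matches the paper; the tightness argument has a genuine gap.} Your reduction to minimizing $G_{C-1}/G_C$ subject to $\sum_i\alpha_i+\rho\le C$ is the paper's argument. So are your three moves (monotonicity in $\rho$, Schur-concavity in $\bm\alpha$, shifting mass from $\rho$ into some $\alpha_i<1$), which are Lemmas~\ref{lem:availability-ratio-property}(i)--(iii) and~\ref{lem: solution}. All of them rest on log-concavity of $k\mapsto G_k$. That property does hold: the coefficients of $e^{\rho z}\prod_i(1-\alpha_iz)^{-1}$ form a P\'olya frequency sequence, and the paper proves it by induction. The gap is in tightness, which is what the two equalities in the theorem assert. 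Making Jensen and the slack $\mathbf 1\{x_i\ge1\}\le x_i$ nearly tight can at best give $\R^{\tilde{\bm\lambda}}/\R^*\to\gamma_{C,N}$, i.e., that the particular policy $\tilde{\bm\lambda}$ does no better. It gives no upper bound on $\R^{\sta*}$, which is a maximum over all static policies. ``The static policy cannot coordinate vehicles in this way'' is exactly the claim that needs proof. This route also presupposes the structure of $\bm\lambda^*$, which you only assert. Moreover, read literally (valuations concentrated at one price, huge $\Lambda_{ij}$), your instance has revenue on the cycle proportional to throughput. Then no policy gains by keeping vehicles idle, large static rates are near-optimal, and the ratio tends to $1$. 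The objective has to reward availability itself, as an intercept $r^{ij}(0^+)>0$ does, at a precisely calibrated rate against transit load.

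\textbf{The missing idea: calibrate the revenue to the extremal point and bound every static policy by convexity.} Let $m=\min\{N,C\}$ and $Q_C=G_C/G_{C-1}$, which is convex and homogeneous of degree one (Lemma~\ref{lem:Q_C convex}). Let $a$ and $b$ be its partial derivatives in $\alpha_i$ ($i\le m$) and in $\rho$ at $(\hat{\bm\alpha},\hat\rho)$. On an $N$-cycle with exponential travel times of mean $1/\epsilon$, put $r^{ij}(\lambda)=a+\frac{bN}{m\epsilon}\lambda$ on $m$ arcs and $\epsilon\lambda(1-\lambda)$ on the remaining arcs. By \eqref{eq:static-rev} and flow balance on the cycle, every static policy earns at most $(a\sum_{i\le m}\omega_i+b\rho)/Q_C(\bm\omega,\rho)+O(\epsilon)$. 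The supporting hyperplane $Q_C(\bm\omega,\rho)\ge a\sum_{i\le m}\omega_i+b\rho$ then bounds this by $1+O(\epsilon)$. On the dynamic side you never need $\bm\lambda^*$. Serve at rate $\epsilon^2$ when $x_i=1$ ($i\le m$) and at rate $1$ otherwise. This policy has a queue-dependent product form that concentrates on one idle vehicle at each of the first $m$ locations with $C-m$ in transit, so it earns $ma+b(C-m)=1/\gamma_{C,N}$ in the limit (Lemma~\ref{lem:tightness}).

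\textbf{A smaller point on the explicit bounds.} Your Cauchy--Schwarz route is unspecified. The paper uses the identity $C\,G_C=(C-m)G_{C-1}+m\sum_{i<C}G_i$ at the extremal point, which is the $z^{C-1}$ coefficient of $F'=(C-m+\frac{m}{1-z})F$ for $F=e^{(C-m)z}(1-z)^{-m}$. Combined with $G_i/G_C\le\gamma_{C,N}^{C-i}$, this gives $(C-m)\gamma_{C,N}^2-2C\gamma_{C,N}+C\le0$. That yields only $1/2$ when $m=C$, so the $C/(2C-1)$ bound needs the separate identity in Lemma~\ref{lem: lowerbound-gamma}.
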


Theorem~\ref{thm:main-guarantee} establishes the first universal, non-asymptotic, 50\% guarantee for static pricing against the optimal dynamic pricing, while the best previously known guarantee does not admit any positive universal guarantee. For any $C, N \in \mathbb{Z}_+$,  static pricing achieves a tight $ \gamma_{C,N}$ guarantee, which holds for all concave revenue rate functions $ r^{ij}(\cdot) $ and all travel time distributions with finite means. In particular, for a single location $N=1$, the guarantee reduces to $\gamma_{C,1} = \frac{\sum_{i=0}^{C-1}(C-1)^i/i!}{\sum_{i=0}^{C}(C-1)^i/i! } \geq\frac{15}{19}  $, recovering the one in \cite{elmachtoub2023power}. The explicit lower bound can be equivalently written as $ \frac{C}{C+\min\{C-1,\sqrt{CN}\}} = \max\{ \frac{C}{2C-1}, \frac{\sqrt C}{\sqrt{C} +   \sqrt{\min\{N,C\} } } \} $,
which implies that for fixed $N$, $\gamma_{C,N}$ converges to 1 as $C$ grows to infinity. Finally, we note that when $N\geq C$, $\gamma_{C,N}= \binom{2C-2}{C-1} /\binom{2C-1}{C} = C/(2C-1)$.

\textbf{Comparison to Prior Guarantees.} As mentioned before, our bound is at least $50\%$, achieving the first constant approximation ratio. The previous best guarantee developed in \cite{benjaafar2023pricing} is $ B_{C,N}: = 1 + \frac{N-1}{2C}+\frac{1}{2(N+C)}-\sqrt{\left(\frac{N-1}{2C}+\frac{1}{2(N+C)}\right)^2+\frac{N-1}{C}+\frac{1}{N+C}}$. Our guarantee $ \gamma_{C,N} $ dominates it for any $N$ and $C$.
\begin{corollary}\label{prop: dominate}
    $ \gamma_{C,N} > B_{C,N} $.  
\end{corollary}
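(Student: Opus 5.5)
The plan is to avoid working with the exact expression for $\gamma_{C,N}$ and instead compare $B_{C,N}$ with the explicit lower bound from Theorem~\ref{thm:main-guarantee}:
\[
\gamma_{C,N}\ \ge\ L_{C,N}:=\max\left\{\frac{C}{2C-1},\ \frac{\sqrt C}{\sqrt C+\sqrt{m}}\right\},\qquad m:=\min\{N,C\}.
\]
It then suffices to show $L_{C,N}>B_{C,N}$ for all $C\ge 2$ and $N\ge 1$. We stay with $C\ge2$, as in the rest of the paper, because for $C=1$ and $N=1$ one has $L_{1,1}=B_{1,1}=1$.

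The key structural step is to recognize $B_{C,N}$ as a root of a quadratic. Set
\[
a:=\frac{N-1}{2C}+\frac{1}{2(N+C)}>0 .
\]
Then $B_{C,N}=1+a-\sqrt{a^2+2a}$, which is the smaller root of $t^2-2(1+a)t+1=0$. Hence $B_{C,N}\in(0,1)$ and $B_{C,N}+1/B_{C,N}=2+2a$.

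The map $g(t)=t+1/t$ is strictly decreasing on $(0,1)$. Therefore, for any $L\in(0,1)$, the inequality $L>B_{C,N}$ is equivalent to $g(L)<2+2a$. Since $g(L)-2=(1-L)^2/L$, this becomes the condition
\[
\frac{(1-L)^2}{L}<\frac{N-1}{C}+\frac{1}{N+C}.
\]
I verify this in two cases.

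\textbf{Case $N\le C$.} Take $L=\sqrt C/(\sqrt C+\sqrt N)$. A direct computation gives $(1-L)^2/L=N/(C+\sqrt{CN})$. Write $s=\sqrt{N/C}\in(0,1]$, so that $N=s^2C$. Then:
\begin{itemize}
\item the left side equals $s^2/(1+s)$;
\item the right side equals $s^2-\frac{1}{C}+\frac{1}{C(1+s^2)}=s^2\bigl(1-\frac{1}{C+N}\bigr)$.
\end{itemize}
After dividing by $s^2$, the required inequality is $C+N<(1+s)(C+N-1)$, that is, $s(C+N-1)>1$. This holds because
\[
s(C+N-1)=\sqrt{N/C}\,(C+N-1)\ \ge\ \sqrt{1/C}\cdot C=\sqrt C>1
\]
for $C\ge2$. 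This case includes $N=1$.

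\textbf{Case $N>C$.} Take $L=C/(2C-1)$. Then $(1-L)^2/L=(C-1)^2/\bigl(C(2C-1)\bigr)<1$. Since $N\ge C+1$, the right side satisfies $(N-1)/C+1/(N+C)>1$, so the inequality holds.

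The only nonroutine step is the reformulation through $g(t)=t+1/t$. Once the nested square root in $B_{C,N}$ is removed this way, each case reduces to an elementary inequality. The main potential pitfall is the $N\le C$ case: cruder bounds on $1/(N+C)$ fail for small $N$ and large $C$. The exact substitution $N=s^2C$ avoids this, because it makes the $1/(N+C)$ term combine exactly with $-1/C$.

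Strictness of $\gamma_{C,N}>B_{C,N}$ follows because every inequality in both cases is strict for $C\ge 2$.
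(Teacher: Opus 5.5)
Your proof is correct and is essentially the paper's argument. The paper uses the same equivalence $1+\frac{x}{2}-\sqrt{\frac{x^2}{4}+x}<s \iff x>\frac{(1-s)^2}{s}$, which it states without proof and which your $t+1/t$ monotonicity argument justifies. It also makes the same reduction to $(N+C-1)\sqrt{CN}>C$ in the small-$N$ case, and differs only in using $s=\frac12$ rather than $C/(2C-1)$ when $N\ge C$.

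One harmless slip concerns your remark about $C=1$. In fact $B_{1,1}=\frac54-\frac34=\frac12$, not $1$. At $C=N=1$ it is the branch $\sqrt{C}/(\sqrt{C}+\sqrt{m})=\frac12$ that ties with $B_{1,1}$, while $L_{1,1}=1$ strictly exceeds it. This does not affect your argument for $C\ge 2$.
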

Moreover, $B_{C,N} $ is small especially when $N >C $, while our guarantee is always higher than 0.5. For $C \geq 100 $, \cite{banerjee2022pricing} provide another guarantee of $ (1-2\sqrt{\frac{\log C}{C}})(\frac{\sqrt{C\log C}}{\sqrt{C\log C}+N-1}-\frac{3}{\sqrt{C\log C}}) $ under additional offered load assumptions. In Figure~\ref{fig:bound-comparison}, we compare our $ \gamma_{C,N} $ guarantee and those known guarantees for $N = 20, 50$, showing that our bounds are much higher. 

\begin{figure}[h]
\caption{Comparison of the guarantee in Theorem 1 with prior guarantees }\label{fig:bound-comparison}
\centering
\includegraphics[width=0.95\textwidth]{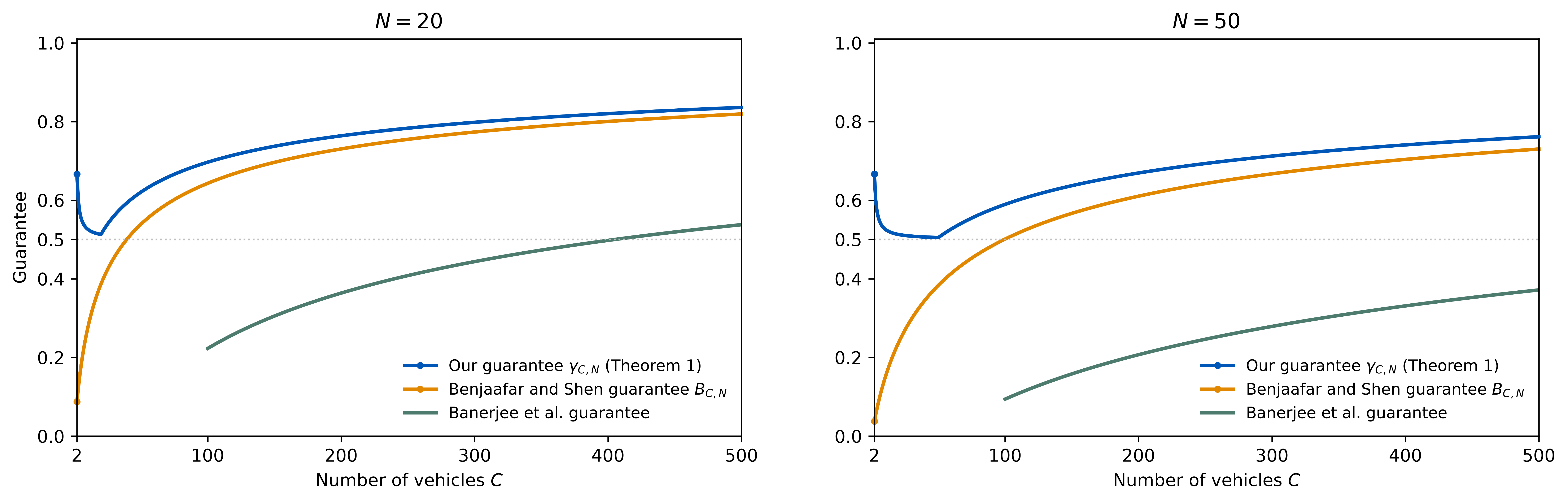}
\end{figure}

\textbf{Intuition on the Improved Guarantee.} 
The improvement of our guarantee $ \gamma_{C,N}$ over the previous guarantees \citep{banerjee2022pricing,benjaafar2023pricing} reflects a different static policy construction. \cite{benjaafar2023pricing} rely on a flow-based static policy with an offered load parameter $\Delta \geq C$ solved from the flow relaxation (\citealp{banerjee2022pricing} use $\Delta = C $):
\begin{equation}\label{opt:fluid-program}
\begin{aligned}
\max_{\bm\lambda: \ 0\leq\lambda^{ij}\leq\Lambda_{ij} } \quad &\sum_{(i,j)\in\A}r^{ij}(\lambda^{ij}) \\
\mathrm{s.t.}\quad &\sum_{j:(i,j)\in\A}\lambda^{ij}=\sum_{k:(k,i)\in\A}\lambda^{ki}&&i\in[N] \\
&\sum_{(i,j)\in\A}\frac{\lambda^{ij}}{\mu^{ij}}\leq \Delta.  
\end{aligned}
\end{equation}
The flow relaxation accepts all trip requests and constrains only the total offered transit load. It also imposes a balanced network constraint: $ \sum_{j:(i,j)\in\A}\lambda^{ij}=\sum_{k:(k,i)\in\A}\lambda^{ki} $, $ i \in [N] $, meaning that each location is equally popular as both an origin and a destination under this static policy. Let $ \bm \lambda_{\Delta} = ( \lambda^{ij}_{\Delta} )_{(i,j) \in \A} $ denote the static policy solved from \eqref{opt:fluid-program} and $ \alpha_i ( \bm \lambda_{\Delta} ) $ be its availability at location $i \in [N] $. A balanced network implies that the availability is the same across all locations, i.e., $ \alpha_i ( \bm \lambda_{\Delta} ) = \alpha_j ( \bm \lambda_{\Delta} ) $ for all $i,j \in [N]$. \cite{benjaafar2023pricing} then show that the revenue ratio between $ \bm \lambda_{\Delta}  $ and the optimal dynamic policy is lower bounded by the common availability $ \alpha_i ( \bm \lambda_{\Delta} ) $.

However, under the optimal dynamic policy, vehicle availability may be highly unbalanced as a few locations are inherently more popular than others based on the reward function $ r^{ij}(\cdot) $ and mean travel time $ 1/\mu^{ij} $. The balanced static policy from \eqref{opt:fluid-program} may perform poorly in these settings. In contrast, our constructed static policy $ \tilde{\lambda}^{ij}= v_{ij} (\bm \lambda^*)/ \alpha_i(\bm\lambda^*)  $ in \eqref{eq:constructed-policy} respects the unbalanced network of the optimal dynamic policy by using its conditional average effective arrival rate when origin $i$ has a vehicle. Moreover, as shown in Lemma \ref{lem: RevRatio_lower}, the availability under our constructed static policy is in relative proportion to the optimal dynamic policy, i.e., $ \frac{\alpha_i(\tilde{\bm\lambda})}{\alpha_i(\bm\lambda^*)} = \frac{\alpha_j(\tilde{\bm\lambda})}{\alpha_j(\bm\lambda^*)} $ for all $i,j\in [N] $. 

\subsection{Proof of Theorem~\ref{thm:main-guarantee} }

For any $\bm\alpha \in \mathbb R_+^N$ and $\rho \geq 0$, we define an auxiliary function
\begin{align}\label{eq:G_C}
    G_C(\bm\alpha, \rho ):= \sum_{k=0 }^{C} \frac{1}{k!} \rho^k \sum_{ x_1 + \cdots + x_N = C - k  }  \prod_{i=1}^N  \alpha_i^{x_i} .
\end{align}
Let $\rho(\bm \lambda^* ):= \sum_{(i,j) \in \A } v_{ij}(\bm \lambda^*)/\mu^{ij}  $ denote the total travel load of the transit queue. We observe that 
\begin{align*}
 \sum_{\bm x \in \mathcal X_{C,N} } \prod_{i=1}^N \left( \alpha_i (\bm \lambda^*)  \right)^{x_i} \prod_{(i,j)\in\A} \frac{1}{x_{ij}! }  \left(\frac{v_{ij}(\bm{\lambda}^*) }{\mu^{ij}} \right)^{x_{ij}}
 &=  \sum_{k=0 }^{C} \frac{1}{k!} \left( \sum_{(i,j) \in \A } \frac{v_{ij}(\bm \lambda^*) }{\mu^{ij}}  \right)^k \sum_{ x_1 + \cdots + x_N = C - k  }  \prod_{i=1}^N \left( \alpha_i (\bm \lambda^*)  \right)^{x_i}  \\
&= G_C( \bm{\alpha}(\bm \lambda^*), \rho (\bm \lambda^*) ),
\end{align*}
where the first equality follows from the multinomial theorem and the definition of $\mathcal{X}_{C,N} $.  From \eqref{eq: rev-ratio-bound} in Lemma~\ref{lem: RevRatio_lower}, our goal is to lower bound
\begin{align*}
  \frac{\sum_{\bm x \in \mathcal X_{C-1, N} }\prod_{i=1}^N \left( \alpha_i (\bm \lambda^*)  \right)^{x_i} \prod_{(i,j)\in\A} \frac{1}{x_{ij}! }  \left( \frac{1}{\mu^{ij}} v_{ij}(\bm{\lambda}^*) \right)^{x_{ij}} }{\sum_{\bm x \in \mathcal X_{C,N} }\prod_{i=1}^N \left( \alpha_i (\bm \lambda^*)  \right)^{x_i} \prod_{(i,j)\in\A} \frac{1}{x_{ij}! }  \left( \frac{1}{\mu^{ij}}  v_{ij}(\bm{\lambda}^*)  \right)^{x_{ij}} }  =    \frac{G_{C-1}( \bm{\alpha}(\bm \lambda^*), \rho (\bm \lambda^*)  ) }{G_C( \bm{\alpha}(\bm \lambda^*), \rho (\bm \lambda^*)  ) } : = R(\bm{\alpha}(\bm \lambda^*), \rho (\bm \lambda^*)  ). 
\end{align*}
Using Little's law on the optimal dynamic policy, we impose a natural constraint described in Lemma \ref{lem:little's law}.

\begin{lemma}\label{lem:little's law}
From Little's law on the optimal dynamic policy $\bm \lambda^*$, $ \sum_{i=1}^N\alpha_i(\bm\lambda^*)+ \rho (\bm \lambda^*)  \leq C. $
\end{lemma}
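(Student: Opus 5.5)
Under $\bm\lambda^*$ the system is a closed network of exactly $C$ vehicles: every vehicle is at every instant either idle at some location or in transit on some arc. The plan is to write $C$ as the expected number of idle vehicles plus the expected number in transit, and to bound each piece from below by the corresponding term in the lemma. Concretely,
\begin{align*}
C = \int_{\mathcal S_{C,N}} \Big(\sum_{i=1}^N x_i + \sum_{(i,j)\in\A} x_{ij}\Big) \, d\Pi^{\bm\lambda^*}(\bm s)
= \sum_{i=1}^N \mathbb E^{\bm\lambda^*}[x_i] + \sum_{(i,j)\in\A} \mathbb E^{\bm\lambda^*}[x_{ij}].
\end{align*}

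\textbf{Idle part.} Since $x_i$ is a nonnegative integer, we have $x_i \geq \mathbf 1\{x_i\geq 1\}$. Taking expectations gives $\mathbb E^{\bm\lambda^*}[x_i] \geq \alpha_i(\bm\lambda^*)$ for every location $i$.

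\textbf{Transit part.} For each arc $(i,j)$, I will apply Little's law to the infinite-server transit queue of that arc.
\begin{itemize}
\item Vehicles enter this queue exactly when a willing $(i,j)$ customer arrives while $x_i\geq 1$.
\item The long-run entry rate is therefore the throughput $v_{ij}(\bm\lambda^*)$, which follows from PASTA for the Poisson arrivals combined with the state-dependent acceptance probability.
\item Each vehicle spends an independent travel time drawn from $H_{ij}$ in the queue, with mean $1/\mu^{ij}$. Travel times are independent of the state and of the pricing decisions, since the policy only affects who enters.
\item Little's law then gives $\mathbb E^{\bm\lambda^*}[x_{ij}] = v_{ij}(\bm\lambda^*)/\mu^{ij}$.
\end{itemize}
Summing over arcs yields $\rho(\bm\lambda^*)$ for the transit part.

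\textbf{Combining.} Adding the two parts gives $\sum_i \alpha_i(\bm\lambda^*) + \rho(\bm\lambda^*) \le C$.

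\textbf{Main obstacle.} The delicate step is justifying Little's law rigorously under a general (non-Markovian in counts) dynamic policy. This needs ergodicity of $\bm\lambda^*$, which is assumed, so that time averages equal stationary expectations. It also needs the arrival rate into the arc queue to be identified with $v_{ij}(\bm\lambda^*)$, i.e., the integral of $\mathbf 1\{x_i\ge1\}\lambda^{ij*}_{\bm s}$ against $\Pi^{\bm\lambda^*}$. I would handle the latter by the standard compensator (Lévy/martingale) argument for state-dependent Poisson thinning. The finite mean of $H_{ij}$ ensures that the Little's law hypotheses (finite mean sojourn) hold. Only the inequality is needed, and the slack comes entirely from $x_i \ge \mathbf 1\{x_i\ge1\}$.
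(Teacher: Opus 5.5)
Your proof is correct and is essentially the paper's argument. You write $C$ as the expected number of idle vehicles plus the expected number in transit, bound $\mathbb{E}[x_i]\ge\alpha_i(\bm\lambda^*)$ using $x_i\ge\mathbf 1\{x_i\ge 1\}$, and apply Little's law on each arc to get $\mathbb{E}[x_{ij}]=v_{ij}(\bm\lambda^*)/\mu^{ij}$; your added remarks on identifying the entry rate and on ergodicity only make explicit the hypotheses the paper takes for granted.
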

Therefore, a lower bound on $ R(\bm{\alpha}(\bm \lambda^*), \rho (\bm \lambda^*)  )$ reduces to the following minimization problem:
\begin{align}\label{min: ratio}
 \min_{ \bm \alpha \in [0,1]^N,  \ \rho \geq 0 } \ & R( \bm \alpha, \rho)  \qquad \textrm{s.t.}  \quad \sum_{i=1}^N \alpha_i + \rho \leq C .
\end{align}
We next establish structural properties of $R(\bm \alpha, \rho)$ in Lemma~\ref{lem:availability-ratio-property}, indicating the solutions of \eqref{min: ratio}.

\begin{lemma}\label{lem:availability-ratio-property} 
Let $\bm e_i$ denote the $i$-th unit vector. For all $(\bm\alpha,\rho)\in \mathbb R_+^N \times\mathbb R_+$, the following hold.
\begin{enumerate}
\item[(i)] For any $\delta\geq0$, $ R( \bm \alpha,\rho + \delta) \leq R(\bm \alpha,\rho)  $.
\item[(ii)] For any $i\in[N]$ and $0\leq\delta\leq \rho $, $ R(\bm\alpha+\delta\bm e_i,\rho-\delta) \leq R(\bm \alpha,\rho) $.
\item[(iii)] For any $i,j\in[N]$ with $\alpha_i\geq\alpha_j$ and $0\leq\delta\leq  \alpha_j, $, $ R(\bm\alpha+\delta\bm e_i-\delta\bm e_j,\rho) \leq R(\bm \alpha,\rho)$.
\end{enumerate}
\end{lemma}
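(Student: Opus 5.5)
The plan is to encode $G_C$ by a generating function and reduce all three monotonicity statements to log-concavity of the sequence $(G_k)_{k\ge 0}$. From \eqref{eq:G_C}, $G_k(\bm\alpha,\rho)$ is the coefficient of $t^k$ in
\begin{align*}
\Phi(t):=e^{\rho t}\prod_{i=1}^N \frac{1}{1-\alpha_i t},
\end{align*}
so $(G_k)_k$ is the convolution of the sequence $(\rho^k/k!)_k$ with the geometric sequences $(\alpha_i^k)_k$. Each of these is nonnegative, log-concave and has no internal zeros. Convolution preserves these properties, so
\begin{align*}
G_{k}^2\ge G_{k-1}G_{k+1}\quad\text{for all } k,
\end{align*}
equivalently $G_{m}/G_{m+1}$ is nondecreasing in $m$. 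We set $G_{-1}:=0$ and assume $(\bm\alpha,\rho)\neq 0$; degenerate cases follow by continuity or direct inspection. All three parts are then proved by differentiating $\log R=\log G_{C-1}-\log G_C$ along the relevant direction and integrating along the segment.

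For (i), differentiating the generating function gives $\partial_\rho G_k=G_{k-1}$. Hence
\begin{align*}
\partial_\rho \log R=\frac{G_{C-2}}{G_{C-1}}-\frac{G_{C-1}}{G_C}\le 0,
\end{align*}
which is exactly log-concavity at index $C-1$.

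For (ii), write $G^{(i)}_k$ for the coefficient of $t^k$ in $\Phi(t)/(1-\alpha_i t)$. Then $\partial_{\alpha_i}G_k=G^{(i)}_{k-1}$, and the identity $\frac{1}{(1-\alpha t)^2}-\frac{1}{1-\alpha t}=\frac{\alpha t}{(1-\alpha t)^2}$ gives
\begin{align*}
(\partial_{\alpha_i}-\partial_\rho)G_k=\alpha_i G^{(i)}_{k-2}.
\end{align*}
The directional derivative of $\log R$ is therefore $\alpha_i\big(G^{(i)}_{C-3}/G_{C-1}-G^{(i)}_{C-2}/G_C\big)$. It suffices to show that $b_{k-1}/a_k\le b_k/a_{k+1}$ for $a=G$ and $b=a*g$, where $g=(\alpha_i^m)_m$ is nonnegative. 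Expanding,
\begin{align*}
\frac{b_k}{a_{k+1}}=\sum_{m\ge 0} g_m\,\frac{a_{k-m}}{a_{k+1}},\qquad \frac{a_{k-m}}{a_{k+1}}=\prod_{\ell=k-m}^{k}\frac{a_\ell}{a_{\ell+1}}.
\end{align*}
Shifting every index down by one can only decrease each factor, by the monotone ratios. So the inequality holds termwise in $m$. Integrating over $\delta\in[0,\rho]$ keeps $\rho-\delta\ge0$ and gives (ii).

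For (iii), use $\frac{1}{1-a t}-\frac{1}{1-b t}=\frac{(a-b)t}{(1-at)(1-bt)}$. This gives
\begin{align*}
(\partial_{\alpha_i}-\partial_{\alpha_j})G_k=(\alpha_i-\alpha_j)\,G^{(ij)}_{k-2},
\end{align*}
where $G^{(ij)}=G*g$ with the nonnegative kernel $g=(\alpha_i^m)*(\alpha_j^m)$. The same termwise ratio argument shows the derivative of $\log R$ in the direction $\bm e_i-\bm e_j$ is $\le 0$ whenever $\alpha_i\ge\alpha_j$. Along the path $\bm\alpha+s(\bm e_i-\bm e_j)$, $s\in[0,\delta]$, the order $\alpha_i+s\ge\alpha_j-s$ is preserved and $\alpha_j-s\ge0$. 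Integrating gives (iii).

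The main obstacle is the termwise comparison in (ii) and (iii): the ratio of a convolved sequence to the original must shift the right way, and boundary indices such as $C=2$ (where $G_{C-3}=0$) need care. Log-concavity of $(G_k)$, established once via preservation under convolution, is the single tool that should carry all three parts.
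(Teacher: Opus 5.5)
Your proposal is correct and follows essentially the same route as the paper. The paper first shows that $G_{k-1}/G_k$ is non-decreasing in $k$; it does this by a direct induction over locations using $G_k^{(i)}=G_k^{(i-1)}+\alpha_iG_{k-1}^{(i)}$, where you instead cite closure of log-concavity under convolution. It then differentiates $R$ along each direction and reduces every term to $G_{C-1-l}/G_{C-l}\le G_{C-1}/G_C$, which is exactly your termwise comparison once $G^{(i)}$ and $G^{(ij)}$ are expanded (your generating-function identities are a compact packaging of the paper's sums $\sum_{l}\alpha_i^{l-1}G_{k-l}$).
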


Lemma~\ref{lem:availability-ratio-property} implies that the optimal solution of \eqref{min: ratio} has a simple structure. Part (i) makes the constraint binding, part (iii) pushes the $\alpha_i$'s toward $0$ or $1$, and part (ii) transfers mass from $\rho$ to any $\alpha_i<1$, which yields the following lemma.

\begin{lemma}\label{lem: solution}
There exists an optimal solution to \eqref{min: ratio} with $\sum_{i=1}^N \hat{\alpha}_i =  \min \{N, C\} $, $ \hat{\alpha}_i\in \{0,1 \} $ for all $i \in [N]$, and $ \hat{\rho} = C -  \min \{N, C\}$.
\end{lemma}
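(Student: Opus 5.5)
The plan is to start from an arbitrary optimal solution of \eqref{min: ratio} and modify it, one step at a time, into a point of the claimed form without ever increasing $R$. Existence of a minimizer is not an issue. $R=G_{C-1}/G_C$ is continuous on the compact feasible set $[0,1]^N\times[0,C]$, and it is well defined because $G_C\geq \rho^C/C!+\sum_i\alpha_i^C>0$ away from the origin. At the origin we also have $G_{C-1}=0$, and that corner is handled by taking limits, or simply by noting that $R(\bm 0,\rho)=C/\rho$ is decreasing. So we fix a minimizer $(\bm\alpha,\rho)$ and apply Lemma~\ref{lem:availability-ratio-property} in the following order.

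\emph{Step 1: make the constraint binding.} If $\sum_i\alpha_i+\rho<C$, we increase $\rho$ until equality holds. By Lemma~\ref{lem:availability-ratio-property}(i) this does not increase $R$, so the new point is still optimal, and it satisfies $\sum_i\alpha_i+\rho=C$.

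\emph{Step 2: push the $\alpha_i$'s to $\{0,1\}$.} Suppose two coordinates are fractional, say $0<\alpha_j\le\alpha_i<1$. We move mass $\delta=\min\{\alpha_j,1-\alpha_i\}$ from $j$ to $i$. Lemma~\ref{lem:availability-ratio-property}(iii) guarantees this does not increase $R$. After the move, either $\alpha_j=0$ or $\alpha_i=1$, so the number of fractional coordinates drops by at least one. The sum of the $\alpha_i$ and $\rho$ are unchanged, so the constraint stays binding. Repeating this, we end with at most one fractional coordinate.

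\emph{Step 3: transfer mass from $\rho$ into the $\alpha$'s.} If some $\alpha_i<1$ and $\rho>0$, we move $\delta=\min\{\rho,1-\alpha_i\}$ from $\rho$ to $\alpha_i$. This is allowed by Lemma~\ref{lem:availability-ratio-property}(ii), does not increase $R$, and keeps the sum equal to $C$. We do this first for the fractional coordinate, if there is one, and then for coordinates equal to $0$. The total $\sum_i\alpha_i+\rho=C$ is an integer, so the $\alpha$'s remain in $\{0,1\}$ while $\rho$ stays an integer. The procedure stops in one of two ways:
\begin{itemize}
\item $\rho=0$. Then $\sum_i\alpha_i=C$, which by integrality forces $C\le N$ and all $\alpha_i\in\{0,1\}$.
\item All $\alpha_i=1$. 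Then $\sum_i\alpha_i=N$ and $\rho=C-N\ge0$.
\end{itemize}
There is one subtlety when Step 3 begins with a leftover fractional coordinate. At that point $\rho=C-m-\alpha_i$, where $m$ is the number of ones, so $\rho>0$ unless $m+\alpha_i=C$. That equality would force $\alpha_i$ to be an integer, a contradiction, so the transfer can always proceed. In both terminal cases $\sum_i\hat\alpha_i=\min\{N,C\}$, $\hat\alpha_i\in\{0,1\}$ and $\hat\rho=C-\min\{N,C\}$, which is the claimed structure.

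The step I expect to be the main obstacle is the bookkeeping in Steps 2--3: checking that each move stays inside $[0,1]^N\times\mathbb R_+$ and satisfies the hypotheses of Lemma~\ref{lem:availability-ratio-property}. In particular, (iii) requires $\alpha_i\ge\alpha_j$ and $\delta\le\alpha_j$, and (ii) requires $\delta\le\rho$. We also need termination, which follows because each move either fixes a coordinate at $0$ or $1$ or exhausts $\rho$. Since every move weakly decreases $R$ starting from an optimum, the final point is also optimal, which proves Lemma~\ref{lem: solution}.
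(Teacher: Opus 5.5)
Your proof is correct and follows essentially the same route as the paper: you make the constraint binding via part (i), consolidate fractional coordinates via part (iii) until at most one remains, then move mass from $\rho$ into the $\alpha_i$'s via part (ii), and you close with the same integrality argument at termination. The only difference is that you start from a minimizer, which is why you need the existence and origin discussion; the paper instead starts from an arbitrary feasible point, and since every point of the final form has the same value of $R$ by symmetry, that already shows such a point is optimal without any existence argument.
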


Because $(\bm\alpha (\bm\lambda^*), \rho(\bm\lambda^*))$ is a feasible solution of \eqref{min: ratio}, we obtain that
\begin{align}
R(\bm{\alpha}(\bm \lambda^*), \rho (\bm \lambda^*)  ) \geq R( \hat{\bm\alpha}, \hat{\rho} ) 
=& \frac{ \sum_{k=0 }^{C-1} \frac{1}{k!} \hat{\rho}^k \sum_{ x_1 + \cdots + x_N = C -1 - k  }  \prod_{i=1}^N  \hat{\alpha}_i^{x_i}  }{ \sum_{k=0 }^{C} \frac{1}{k!} \hat{\rho}^k \sum_{ x_1 + \cdots + x_N = C - k  }  \prod_{i=1}^N  \hat{\alpha}_i^{x_i} } \notag  \\
=&  \frac{\sum_{k=0}^{C-1} \frac{1}{k!} (C-\min\{N,C\})^k \binom{\min\{N,C\}+C-k-2}{C-k-1}}{\sum_{k=0}^{C}\frac{1}{k!} (C-\min\{N,C\})^k\binom{\min\{N,C\}+C-k-1}{C-k}} = \gamma_{C,N} \label{eq: gamma} . 
\end{align}
The inequality holds by Lemmas \ref{lem:little's law} and \ref{lem: solution}. The first equality follows from \eqref{eq:G_C}. The second equality follows from $\hat{\rho} = C -  \min \{N, C\} $, the fact that exactly $\min\{N,C\}$ components of $\hat{\bm\alpha}$ equal 1 and weak composition formula for the number of $\min\{N,C\}$ nonnegative integers summing to $C-k$ or $C-1-k$. This proves $ \frac{\R^{\tilde{\bm\lambda}}}{\R^{*}} \geq \gamma_{C,N} $. We then show $ \gamma_{C,N} $ is tight by presenting an example with a matching lower bound.

\begin{lemma}\label{lem:tightness}
Consider a directed cycle $\A=\{(1,2),\ldots,(N-1,N),(N,1)\}$. Let $H_{ij}$ be exponential with mean $1/\epsilon$, let $p^{ij}(\lambda)= \frac{b N}{m\epsilon} + \frac{a}{\lambda}$ for the first $\min\{N,C\}$ arcs along the cycle for some constants $a,b$, and let $p^{ij}(\lambda)=\epsilon (1-\lambda) $ on the remaining arcs. Then, $\lim_{\epsilon\to0}\frac{\R^{\sta*}}{\R^*}=\gamma_{C,N}$.
\end{lemma}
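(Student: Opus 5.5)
The proof shows $\limsup_{\epsilon\to 0}\R^{\sta*}/\R^*\le\gamma_{C,N}$; the reverse inequality already follows from the analysis leading to \eqref{eq: gamma}. Write $M:=\min\{N,C\}$ and call the first $M$ arcs of the cycle (and their origins) \emph{valuable}, the others \emph{cheap}. On a valuable arc $r^{ij}(\lambda)=a+\frac{bN}{m\epsilon}\lambda$, which is linear and hence concave. This revenue has two parts: a reward $a$ per unit time for merely having a vehicle available at the origin (the limit $\lambda\to0$), and a reward proportional to throughput. On cheap arcs $r^{ij}=O(\epsilon)$. All travel times have mean $1/\epsilon$. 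In the limit, revenue is therefore driven by two quantities: the availabilities $\alpha_i$ at valuable locations, and the total transit load $\rho$. I will write both policies' revenues in terms of the pair $(\bm\alpha,\rho)$ that appears in \eqref{min: ratio}.

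\textbf{Step 1 (lower bound on $\R^*$).} I will exhibit an explicit threshold dynamic policy. At a valuable location $i$, charge the price corresponding to $\lambda=0$ when $x_i=1$, which keeps the last vehicle parked and earns $a$. When $x_i\ge2$, charge a price inducing a rate $L$ with $L\gg\epsilon$. Cheap arcs accept at a fixed rate, say $\lambda=1/2$. As $\epsilon\to0$, idle periods become negligible compared with travel times. Each valuable location then keeps one vehicle almost always, so $\alpha_i\to1$. The remaining $C-M$ vehicles are essentially always in transit, so $\rho\to C-M$. The cycle throughput $v$ satisfies $Nv/\epsilon\approx C-M$. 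This yields
\[\liminf_{\epsilon\to0}\R^*\ \ge\ Ma+\tfrac{bM}{m}(C-M).\]
When $N\le C$ every location is valuable. When $N>C$, the cheap locations must be traversed quickly relative to $1/\epsilon$, and I need to check that they hold a vehicle only with vanishing probability.

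\textbf{Step 2 (upper bound on $\R^{\sta*}$).} Fix any static $\bm\lambda$. On a cycle all arcs carry the same throughput $v=\alpha_i\lambda_i$. The transit load is $\rho=Nv/\epsilon$, and the linear part of the revenue equals $\frac{bM}{m}\rho$. By \eqref{eq:product-form} with $w_i\equiv1$, set $q_i:=1/\lambda_i$ with $v$ normalized. Then
\[\alpha_i = q_i\,\frac{G_{C-1}(\bm q,\rho')}{G_C(\bm q,\rho')},\qquad \rho=\rho'\,\frac{G_{C-1}(\bm q,\rho')}{G_C(\bm q,\rho')},\]
with $\rho'=N/\epsilon$ in these units. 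Hence, up to $O(\epsilon)$,
\[\R^{\bm\lambda}=\Big(a\sum_{i\text{ valuable}}q_i+\tfrac{bM}{m}\rho'\Big)\frac{G_{C-1}(\bm q,\rho')}{G_C(\bm q,\rho')}.\]
Cheap locations can be given large rates, so their $q_i$ are negligible. Doing so can only help the static policy, so in the upper bound I take them to be $0$. This reduces the problem to a finite-dimensional maximization of $\Phi(\bm q,\rho')=\big(a\sum_{i\le M}q_i+\frac{bM}{m}\rho'\big)\,R(\bm q,\rho')$ over $\bm q\in\mathbb R_+^M$ and $\rho'\ge0$. Since $R$ is homogeneous of degree $-1$ in $(\bm q,\rho')$, $\Phi$ is scale invariant.

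\textbf{Step 3 (choosing $a,b$; the main obstacle).} I must pick the ratio $b/(ma)$ so that the maximizer of $\Phi$ is $q_i=1$ for all $i$ and $\rho'=C-M$, i.e.\ exactly the extreme point $(\hat{\bm\alpha},\hat\rho)$ of Lemma~\ref{lem: solution}. At that point
\[\Phi=\big(aM+\tfrac{bM}{m}(C-M)\big)\gamma_{C,N},\]
and dividing by Step 1 gives the claim. I would first choose $b/(ma)$ so that the first-order condition holds in the $\rho'$ direction at this point; symmetry together with Lemma~\ref{lem:availability-ratio-property}(iii) handles the $\bm q$ directions. I would then verify global optimality. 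Here I would use that $\log G_C$ is a log-convex generating function, so the ratio $G_{C-1}/G_C$ behaves monotonically along rays, and parts (i)–(ii) of Lemma~\ref{lem:availability-ratio-property} should rule out interior improvements. If a clean global argument is elusive, the fallback is to show that the static optimum is attained on the symmetric family $q_i\equiv q$. There $\Phi$ becomes a one-variable rational function in $\rho'/q$, and the chosen $b/(ma)$ makes $\rho'/q=C-M$ its maximizer.
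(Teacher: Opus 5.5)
Your Steps 1--2 are sound. On the cycle $w_i\equiv 1$, so a static policy earns $(a\sum_{i\le m}q_i+b\rho')\,R(\bm q,\rho')$ on the valuable arcs; your $M$ is the statement's $m$, so $bM/m=b$. Setting the cheap $q_j$ to zero is a valid relaxation, since $R$ is nonincreasing in each $q_j$. Your first-order choice of $b/a$ also coincides with the paper's.

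The genuine gap is Step~3, which is the whole difficulty. You never show that $(\hat{\bm\alpha},\hat\rho)$ is a \emph{global} maximizer of $\Phi=\ell\cdot R$ with $\ell$ linear. A first-order condition is only necessary. Lemma~\ref{lem:availability-ratio-property}(i)--(ii) cannot close the gap: they say $R$ falls when $\rho$ grows or when mass moves from $\rho$ to some $\alpha_i$. But $\ell$ rises under exactly these moves (since $a\ge b\ge 0$), so they say nothing about the trade-off in $\Phi$.

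Your fallback does legitimately reduce to the symmetric family: for fixed $\sum_{i\le m}q_i$ the factor $\ell$ is constant, and equalizing transfers raise $R$ by part (iii). But the remaining claim is simply asserted. With $Q_C:=G_C/G_{C-1}=1/R$, that claim is that $t=C-m$ maximizes $(am+bt)/Q_C((\bm 1_m,\bm 0_{N-m}),t)$. It requires $t\mapsto Q_C((\bm 1_m,\bm 0_{N-m}),t)$ to lie above its tangent line at $t=C-m$, which is a convexity statement you have not established.

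The missing idea is that $Q_C$ is jointly convex in $(\bm\alpha,\rho)$ (Lemma~\ref{lem:Q_C convex}, from the convexity of $h_C/h_{C-1}$ plus a limiting argument) and homogeneous of degree one. Take $a=\partial Q_C/\partial\alpha_i$ for $i\le m$ and $b=\partial Q_C/\partial\rho$, both at $(\hat{\bm\alpha},\hat\rho)$. Three facts then combine:
\begin{itemize}
\item the supporting-hyperplane inequality at $(\hat{\bm\alpha},\hat\rho)$;
\item Euler's identity $Q_C(\hat{\bm\alpha},\hat\rho)=ma+b\hat\rho$;
\item nonnegativity of the other partial derivatives.
\end{itemize}
Together they give $Q_C(\bm\alpha,\rho)\ge a\sum_{i\le m}\alpha_i+b\rho$ for all $(\bm\alpha,\rho)$. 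Hence every static policy earns at most $1$ on the valuable arcs and at most $(N-m)\epsilon/4$ elsewhere. This single inequality replaces your Step~3, and it also makes the symmetric reduction and the cheap-location relaxation unnecessary.

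A smaller issue is in Step~1. Using $\lambda=0$ when $x_i=1$ parks vehicles permanently; for $N\ge C$ the chain is absorbed with all vehicles parked. This conflicts with the standing ergodicity and strong-connectivity assumptions on the dynamic policy. The paper instead uses rate $\epsilon^2$ together with the queue-dependent product form. That choice also settles your open check that cheap locations hold a vehicle only with vanishing probability.
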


Finally, the lemma below provides the lower bound on $\gamma_{C,N}$ and completes the proof of Theorem~\ref{thm:main-guarantee}.
\begin{lemma}\label{lem: lowerbound-gamma}
    $  \gamma_{C,N} \geq \frac{C}{C+\min\{C-1,\sqrt{CN}\}} = \max\{ \frac{C}{2C-1}, \frac{\sqrt C}{\sqrt{C} +   \sqrt{\min\{N,C\} } } \} $. 
\end{lemma}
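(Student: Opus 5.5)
The plan is to read $\gamma_{C,N}$ as the availability of one station in an explicit symmetric closed product-form network, and then combine Little's law with a geometric tail bound. Write $m=\min\{N,C\}$ and $\rho=C-m$, and let $G_n:=G_n(\hat{\bm\alpha},\rho)$ with $\hat{\bm\alpha}$ having exactly $m$ ones, as in Lemma~\ref{lem: solution}, so that $\gamma_{C,N}=G_{C-1}/G_C$.

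\textbf{The bound $C/(2C-1)$.} First I handle the term $C/(2C-1)$ by monotonicity in $m$. Passing from $m$ to $m+1$ means taking one unit of mass out of $\rho$ and placing it on a zero coordinate of $\bm\alpha$. By Lemma~\ref{lem:availability-ratio-property}(ii) this does not increase $R$. Hence $\gamma_{C,N}\ge \gamma_{C,C}$. The paper already notes that $\gamma_{C,C}=\binom{2C-2}{C-1}/\binom{2C-1}{C}=C/(2C-1)$.

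\textbf{The bound $\sqrt C/(\sqrt C+\sqrt m)$.} The ratio $G_{C-1}/G_C$ is exactly $P(x_1\ge 1)$ in a closed product-form network with $C$ customers. This network has $m$ single-server stations of relative load $1$ and one infinite-server station of load $\rho$. Little's law for that network gives
\[
\sum_{i=1}^m E[x_i]+\rho\gamma = C,
\]
because the infinite-server station has mean occupancy $\rho\,G_{C-1}/G_C$. By symmetry, $E[x_1]=(C-\rho\gamma)/m$.

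On the other hand, $P(x_1\ge n)=G_{C-n}/G_C=\prod_{l=0}^{n-1}G_{C-l-1}/G_{C-l}$. The sequence $(G_n)_n$ is a convolution of log-concave sequences: $m$ constant sequences and the Poisson-type sequence $\rho^k/k!$. It is therefore log-concave, so the ratios $G_{n-1}/G_n$ are nondecreasing in $n$. Each factor is then at most $G_{C-1}/G_C=\gamma$, which gives $P(x_1\ge n)\le\gamma^n$. Consequently
\[
E[x_1]\le \frac{\gamma}{1-\gamma}.
\]

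Combining the two expressions for $E[x_1]$ and writing $\beta=1-\gamma$, I get
\[
m+(C-m)\beta\le \frac{m(1-\beta)}{\beta},
\qquad\text{i.e.}\qquad
(C-m)\beta^2+2m\beta-m\le 0 .
\]
A direct substitution shows that the positive root of this quadratic is $\beta^\star=\sqrt m/(\sqrt C+\sqrt m)$. Hence $\beta\le\beta^\star$, which is $\gamma\ge \sqrt C/(\sqrt C+\sqrt m)$.

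\textbf{Equality of the two forms.} It remains to verify the algebraic identity
\[
\frac{C}{C+\min\{C-1,\sqrt{CN}\}}=\max\Bigl\{\frac{C}{2C-1},\frac{\sqrt C}{\sqrt C+\sqrt{\min\{N,C\}}}\Bigr\}.
\]
Observe that $\sqrt C/(\sqrt C+\sqrt m)=C/(C+\sqrt{Cm})$. The maximum of the two fractions therefore corresponds to $\min\{C-1,\sqrt{Cm}\}$ in the denominator. When $N\ge C$ we have $\sqrt{Cm}=C>C-1$, so the minimum is $C-1$ either way, and $\sqrt{Cm}$ may be replaced by $\sqrt{CN}$.

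\textbf{Main obstacle.} The step needing the most care is justifying log-concavity of $(G_n)$, equivalently monotonicity of the availability in the number of vehicles. I would cite closure of log-concave sequences under convolution, or alternatively reprove it from Lemma~\ref{lem:availability-ratio-property} by viewing the removal of a vehicle as a shift of load.
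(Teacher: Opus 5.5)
Your proof is correct. For the bound $\sqrt C/(\sqrt C+\sqrt m)$ it is the paper's argument, recast in probabilistic language. Your conservation identity $m\,E[x_1]+\rho\gamma=C$, once you expand $E[x_1]=\sum_{n=1}^{C}G_{C-n}/G_C$, is exactly the paper's identity $C\,G_C=m\sum_{i=0}^{C-1}G_i+\hat\rho\,G_{C-1}$. The paper obtains that identity by splitting $C=(C-k)+k$ and applying the hockey-stick identity. The remaining steps coincide with the paper: the tail bound $G_{C-n}/G_C\le\gamma^n$, the geometric sum, and the quadratic. Your $(C-m)\beta^2+2m\beta-m\le0$ is the paper's $(C-m)\gamma^2-2C\gamma+C\le 0$ after substituting $\gamma=1-\beta$. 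The log-concavity you flag as the main obstacle is the paper's Lemma~\ref{lem:ratio-monotonicity}. It is proved by induction over convolution with the geometric sequences $(\alpha_i^l)_l$, a special case of the convolution-closure fact you invoke, so you can simply cite it.

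Where you genuinely differ is the $C/(2C-1)$ bound. For $N<C$ the paper uses a second explicit manipulation, $C\,G_C=(2C-1)G_{C-1}-\sum_{k=1}^{C-1}\frac{(C-N)^k}{(k-1)!}\binom{N+C-k-2}{C-k-1}$. You instead use Lemma~\ref{lem:availability-ratio-property}(ii) to show that $\gamma$ is nonincreasing in $m$, and then read off the closed form $\gamma_{C,C}=C/(2C-1)$. Your route is shorter and also explains why the worst case is $N\ge C$. One step needs to be made explicit. When $N<C$, the vector $\hat{\bm\alpha}=\bm 1_N$ has no zero coordinate to receive the unit of mass. First note that appending zero coordinates leaves every $G_k$ unchanged, since terms with $x_i\ge1$ on a zero coordinate vanish. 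Hence $\gamma_{C,N}$ depends only on $(C,m)$, and you can apply (ii) in dimension $C$ with $\delta=1\le\rho$ at each step.
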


\subsection{Zero Travel Times}
We also consider an extreme case in which all trip travel times are zero. Previous works
\citep{waserhole2016pricing,banerjee2022pricing,benjaafar2023pricing} apply different methods to show a guarantee of $\frac{C}{C+N-1}$ for static pricing. Let $\Omega^0_{C,N}$ denote the set of instances in which all travel times are zero. We establish that the tight guarantee of static pricing is $ \frac{C}{C+\min\{C,N\}-1}$. 

\begin{theorem}\label{thm:zero-travel}
$ \inf_{\Omega^0_{C,N}}\frac{\R^{sta*}}{\R^*} = \inf_{\Omega^0_{C,N}} \frac{\R^{\tilde{\bm\lambda}}}{\R^*} = \frac{C}{C+\min\{C,N\}-1}. $
\end{theorem}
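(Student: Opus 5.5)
The plan is to run the same two-sided argument as for Theorem~\ref{thm:main-guarantee}, now specialized to $1/\mu^{ij}=0$.

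\textbf{Lower bound.} With zero travel times every vehicle is always idle at some location. Hence $x_{ij}=0$ almost surely, $\rho(\bm\lambda^*)=0$, and the state space reduces to $\{\bm x\in\mathbb Z_+^N:\sum_i x_i=C\}$. The product form \eqref{eq:product-form} still applies, with the transit factors equal to one, so Lemma~\ref{lem: RevRatio_lower} gives
\[
\frac{\R^{sta*}}{\R^*}\ \ge\ \frac{\R^{\tilde{\bm\lambda}}}{\R^*}\ \ge\ R(\bm\alpha(\bm\lambda^*),0)=\frac{h_{C-1}(\bm\alpha)}{h_C(\bm\alpha)},
\]
where $h_k$ denotes the complete homogeneous symmetric polynomial of degree $k$.

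For the constraint on $\bm\alpha$, Lemma~\ref{lem:little's law} gives $\sum_i\alpha_i\le C$, and trivially $\alpha_i\le1$. I will minimize $R(\bm\alpha,0)$ over this set. Part (i) of Lemma~\ref{lem:availability-ratio-property} is vacuous here, since $\rho$ is fixed at $0$. What I need instead is that scaling $\bm\alpha$ up decreases $R$. This holds because $h_{C-1}/h_C$ is homogeneous of degree $-1$: $R(t\bm\alpha,0)=R(\bm\alpha,0)/t$. So the constraint $\sum_i\alpha_i+\rho\le C$ can be taken binding or saturated at the box, and Lemma~\ref{lem: solution} goes through with $\hat\rho=0$ forced.

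The key question is whether the worst case still has $\min\{N,C\}$ coordinates equal to $1$. Two cases:
\begin{itemize}
\item If $N\le C$, the minimizer is $\bm\alpha=\bm 1$ with $\sum_i\alpha_i=N\le C$. This gives $\binom{N+C-2}{C-1}/\binom{N+C-1}{C}=C/(C+N-1)$.
\item If $N>C$, part (iii) of Lemma~\ref{lem:availability-ratio-property} concentrates mass onto $C$ coordinates equal to $1$, giving $C/(2C-1)$.
\end{itemize}
This matches $\gamma_{C,N}$ with $\rho=0$ and $\min\{N,C\}$ ones, namely $\frac{C}{C+\min\{C,N\}-1}$. The only point needing care is that Lemma~\ref{lem:availability-ratio-property}(iii) and the scaling argument are valid at $\rho=0$. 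Both follow from the same proof as before, since that proof holds for all $\rho\ge0$.

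\textbf{Tightness.} I will adapt the instance of Lemma~\ref{lem:tightness}: a directed cycle on $N$ locations. On the first $m=\min\{N,C\}$ arcs I put the high-value revenue functions $p(\lambda)=b'+a/\lambda$, and on the remaining arcs the near-worthless functions $\epsilon(1-\lambda)$, now with zero travel time. The intent is the following:
\begin{itemize}
\item The optimal dynamic policy can keep the $m$ important origins nearly always available, so $\alpha_i\approx1$ there. It does this by routing vehicles quickly through the cheap arcs, whose low price nearly maximizes demand, while pricing the valuable arcs to accept only when doing so keeps every important location stocked. This is possible when $C\ge m$ vehicles are present.
\item The static policy instead faces a closed Jackson network with $m$ effectively equal single-server stations, all other stations having negligible occupancy. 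Its best availability is therefore $C/(C+m-1)$, which yields the ratio.
\end{itemize}

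\textbf{Main obstacle.} I expect the tightness step to be the hardest part. It requires showing that dynamic pricing indeed attains $\alpha_i\to1$ at the $m$ valuable locations with per-visit revenue matching $r(\tilde\lambda)$. It also requires showing that no static policy beats the $\tilde{\bm\lambda}$ bound, which should follow from the concavity and linear structure of $a+b'\lambda$ making the static optimum insensitive to the chosen rates. If a direct construction is awkward, I will instead take the $\epsilon\to0$ limit of the travel-time instance in Lemma~\ref{lem:tightness}. There I would check that the role played by $\hat\rho=C-m$ transit load disappears once travel times vanish, leaving exactly $\gamma$ with $\rho=0$.
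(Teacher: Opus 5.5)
Your overall architecture is the paper's: Lemma~\ref{lem: RevRatio_lower} with $\rho(\bm\lambda^*)=0$, a reduction to a binary $\bm\alpha$ with $\min\{N,C\}$ ones, and a cycle instance with high-value and near-worthless arcs. However, two steps do not work as written.

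First, the lower bound has a gap. Homogeneity only lets you move along the ray $t\bm\alpha$, and that stops as soon as the largest coordinate reaches $1$. Part (iii) of Lemma~\ref{lem:availability-ratio-property} preserves $\sum_i\alpha_i$ and only makes $\bm\alpha$ more unequal. Neither move raises an individual coordinate. So from $\bm\alpha=(1,1/2)$ with $N=2\le C$, or from $(1,1/2,0)$ with $N=3>C=2$, you can never reach the claimed minimizer. For the same reason, Lemma~\ref{lem: solution} does not carry over with $\rho$ frozen at $0$: its proof first raises $\rho$ and then feeds that mass into the $\alpha_i$. What you need is coordinatewise monotonicity of $R(\cdot,0)$, and part (i) is not vacuous. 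It supplies exactly this through the auxiliary point $(\bm\alpha,\delta)$: by (ii) and then (i), $R(\bm\alpha+\delta\bm e_i,0)\le R(\bm\alpha,\delta)\le R(\bm\alpha,0)$. With that, you raise coordinates until $\sum_i\alpha_i=\min\{N,C\}$ and finish with (iii), which is the paper's argument.

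Second, the claim that the best static policy has availability $C/(C+m-1)$ at each important station is the whole difficulty of the tightness part, not a consequence of the instance. A static policy can choose unequal rates, hence unequal $\omega_i$, including non-negligible occupancy at the cheap stations. Concavity or linearity of the per-arc revenue says nothing about how availability is split across stations. The missing tool is the convexity of $Q_C=G_C/G_{C-1}$ (Lemma~\ref{lem:Q_C convex}):
\begin{itemize}
\item Set $a=\partial Q_C/\partial\alpha_i(\hat{\bm\alpha},0)$. Euler's identity gives $Q_C(\hat{\bm\alpha},0)=ma=(C+m-1)/C$.
\item The tangent plane, together with $\partial Q_C/\partial\alpha_j\ge 0$, gives $Q_C(\bm\omega,0)\ge a\sum_{i\le m}\omega_i$. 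Hence every static policy earns at most $1$ plus the cheap-arc revenue.
\item The paper's dynamic policy uses rate $\delta$ at an important location holding exactly one vehicle and rate $1$ otherwise. It earns $a\sum_{i\le m}\alpha_i\to ma$.
\end{itemize}
This requires a constant revenue rate $r^{ij}\equiv a$ on the important arcs, i.e.\ $b'=0$. With zero travel times, a positive slope just rewards throughput, which static policies also collect. For example, with $N=C=2$ and $r(\lambda)=1+\lambda$ on both arcs, the static policy $\lambda\equiv 1$ is already optimal.

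Finally, drop the fallback. In Lemma~\ref{lem:tightness} the mean travel time is $1/\epsilon$, so $\epsilon\to0$ makes trips long rather than short, and those instances are not in $\Omega^0_{C,N}$. Their limit ratio is $\gamma_{C,N}$, which is strictly below $C/(C+N-1)$ when $N<C$; for example, $\gamma_{2,1}=4/5$ versus $1$. The load $\hat\rho=C-m$ does not disappear in that limit; it is created by those long trips.
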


When $N<C$, our guarantee is exactly $\frac{C}{C+N-1}$, while it improves to $\frac{C}{2C-1}$ when $N\geq C$. The proof follows the same argument as that of Theorem~\ref{thm:main-guarantee}, using an additional fact that the total travel load satisfies $\rho(\bm\lambda^*)=0$ in \eqref{eq: gamma}.


\section{Optimal Static Policy}\label{sec:opt-static}
Our previous analysis is based on a constructed static policy from the optimal dynamic policy. The implementation of such constructed static policy is hard as the optimal dynamic policy is generally unknown. In practice, one can directly compute the optimal static policy, which we show can be efficiently computed.

Fix a static policy $\bm\lambda=(\lambda^{ij})_{(i,j)\in\A}$ that induces a strongly connected network and recall that $ \lambda_i=\sum_{j:(i,j)\in\A}\lambda^{ij} $, $i \in [N] $. For notation simplicity, let $\omega_i$ and $\lambda^{ij}$ be a solution of the traffic equations \eqref{eq: traffic}, i.e., $ \omega_i\lambda_i=\sum_{k:(k,i)\in\A}\omega_k\lambda^{ki} $. Then, from \eqref{eq:product-form}, the availability at location $i $, $\alpha_i( \bm \lambda )$, is
\begin{align*}
\alpha_i(\bm\lambda) =
\sum_{\bm x\in\mathcal X_{C,N}:x_i \geq 1} \mathbb{P}_{\bm x}(\bm\lambda) 
= \frac{\sum_{ \bm x \in \mathcal{X}_{C,N}: x_i \geq 1} \prod_{i=1}^N \omega_i^{x_i} \prod_{(i,j)\in\A} \frac{1}{x_{ij}! }  \left( \omega_i \lambda^{ij}/ \mu^{ij} \right)^{x_{ij}}  }{\sum_{\bm x \in \mathcal X_{C,N}}\prod_{i=1}^N \omega_i^{x_i} \prod_{(i,j)\in\A} \frac{1}{x_{ij}! }  \left(\omega_i  \lambda^{ij}/\mu^{ij} \right)^{x_{ij}} }. 
\end{align*}
Thus, the long-run average static revenue under $\bm\lambda=(\lambda^{ij})_{(i,j)\in\A}$ is expressed as
\begin{align}\label{eq:static-rev}
    \R^{sta}(\bm\lambda) \! = \! \sum_{(i,j)\in\A} \! \alpha_i(\bm\lambda) r^{ij}(\lambda^{ij}) \! = \! \frac{\sum_{ \bm x \in \mathcal{X}_{C-1, N}} \! \prod_{i=1}^N \omega_i^{x_i} \prod_{(i,j)\in\A} \frac{1}{x_{ij}! }  \left( \omega_i \lambda^{ij}/ \mu^{ij} \right)^{x_{ij}}  }{\sum_{\bm x \in \mathcal X_{C,N}} \! \prod_{i=1}^N \omega_i^{x_i}   \prod_{(i,j)\in\A} \frac{1}{x_{ij}! }  \left(\omega_i  \lambda^{ij}/\mu^{ij} \right)^{x_{ij}} } \sum_{(i,j)\in\A} \omega_i r^{ij}(\lambda^{ij}) ,
\end{align}
where the second equality follows from extracting one  $\omega_i$ in the numerator of $\alpha_i(\bm\lambda)$. Note that $ \omega_i $ and $\lambda^{ij}$ need to satisfy the bilinear constraints $  \sum_{ j: (i,j) \in \A } \omega_i \lambda^{ij} = \omega_i\lambda_i = \sum_{k:(k,i)\in\A} \omega_k\lambda^{ki} $, $ i \in [N] $. We next establish the computational tractability of the optimal static pricing policy.

\begin{theorem}\label{thm:convex-static-program}
The optimal static policy maximizing \eqref{eq:static-rev} can be computed by solving a convex optimization problem.
\end{theorem}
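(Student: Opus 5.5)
The plan is to reparametrize \eqref{eq:static-rev} in terms of availabilities and throughputs instead of prices and traffic-equation weights. In the new variables the revenue becomes a sum of perspective functions of the concave $r^{ij}$, the balance constraints become linear, and the only nonlinear coupling is one normalization constraint whose feasible region I will try to prove convex.

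\textbf{Change of variables.} Fix a static policy $\bm\lambda$ with traffic solution $\omega$, and put $v_{ij}=\omega_i\lambda^{ij}$. Then the bilinear constraints $\omega_i\lambda_i=\sum_{k}\omega_k\lambda^{ki}$ become the linear balance constraints $\sum_j v_{ij}=\sum_k v_{ki}$. As in the proof of Theorem~\ref{thm:main-guarantee}, the multinomial theorem rewrites the normalizing sums in \eqref{eq:static-rev} as $G_{C-1}(\omega,\rho)$ and $G_C(\omega,\rho)$ from \eqref{eq:G_C}, with $\rho=\sum_{(i,j)}v_{ij}/\mu^{ij}$. The term $\omega_i r^{ij}(\lambda^{ij})$ equals $\omega_i r^{ij}(v_{ij}/\omega_i)$, which is the perspective of $r^{ij}$.

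\textbf{Scaling.} The traffic solution is only defined up to a positive scalar, and $G_{C-1}/G_C$ is homogeneous of degree $-1$ in $(\omega,\rho)$. I therefore rescale by $t=G_{C-1}(\omega,\rho)/G_C(\omega,\rho)$ and set $a_i=t\omega_i$ and $u_{ij}=tv_{ij}$. These are exactly the availability $\alpha_i(\bm\lambda)$ and the throughput $\alpha_i\lambda^{ij}$, and $\sigma:=t\rho=\sum u_{ij}/\mu^{ij}$. By homogeneity the pair $(a,\sigma)$ satisfies $G_C(a,\sigma)=G_{C-1}(a,\sigma)$. The revenue becomes $\sum_{(i,j)}a_i r^{ij}(u_{ij}/a_i)$, which is jointly concave in $(a,u)$ because it is a sum of perspectives of concave functions. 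It remains to enforce $0\le u_{ij}\le \Lambda_{ij}a_i$, which is linear.

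\textbf{Relaxed problem.} I propose the program
\[
\max_{a,u}\ \sum_{(i,j)\in\A} a_i r^{ij}(u_{ij}/a_i)\quad\text{s.t.}\quad \sum_j u_{ij}=\sum_k u_{ki},\ \ 0\le u_{ij}\le\Lambda_{ij}a_i,\ \ \phi\Big(a,\textstyle\sum u_{ij}/\mu^{ij}\Big)\le 1,
\]
where $\phi:=G_C/G_{C-1}$. The function $\phi$ is positive and homogeneous of degree $1$.

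\emph{Equivalence.} Every static policy maps to a point of this program with $\phi=1$ and the same objective value. Conversely, take any feasible $(a,u)$ with $a>0$ on the support, and set $\lambda^{ij}=u_{ij}/a_i$ and $\omega=a$. This is a static policy. Scaling $(a,u)$ by $s=1/\phi\ge1$ produces a point with $\phi=1$ that corresponds to that policy's true availabilities. The revenue is degree-1 homogeneous in $(a,u)$ and nonnegative, so the policy's revenue is $s$ times the objective value, which is at least the objective value. Hence the program's optimum equals $\R^{sta*}$, and an optimal point recovers prices through $\lambda^{ij}=u_{ij}/a_i$. Boundary points with $a_i=0$ are handled by the closure of the perspective together with a limiting argument.

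\textbf{Main obstacle.} The key remaining step is convexity of $\{(a,\sigma)\ge0:\phi(a,\sigma)\le1\}$. Since $\phi$ is positive and degree-1 homogeneous, this is equivalent to convexity of $\phi$ itself. My first attempt will be the two-dimensional sanity checks: for $\rho=0$, $N=1$ we get $\phi=a$, and for the pure transit case $\phi$ is related to the Erlang loss function. I would then try to show convexity of $\phi=G_C/G_{C-1}$ directly. One route is to write $G_C$ as the coefficient of $z^C$ in $e^{\sigma z}\prod_i(1-a_iz)^{-1}$, so that $G_C/G_{C-1}$ becomes a ratio of consecutive coefficients, and then use log-concavity/Newton-type inequalities for these coefficients. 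A second route is an interpretation of $1/\phi$ as a blocking-type quantity of a single-class closed network, which is known to be convex in load. If neither gives full convexity, I would fall back on a quasiconvexity argument along rays, combined with the homogeneity of $\phi$.
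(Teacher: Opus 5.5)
Your change of variables and the equivalence argument are correct and match the paper. Your $(a,u)$ and $\phi$ are the paper's $(\bm\omega,\bm z)$ and $Q_C$ in \eqref{opt:convex-program}, and your rescaling by $1/\phi$ is the proof of Lemma~\ref{lem:static-equivalence}. The genuine gap is the step you flag as the main obstacle: joint convexity of $\phi=G_C/G_{C-1}$ in $(a,\sigma)$. This is the entire nontrivial content of the theorem, and none of your three routes establishes it.
\begin{itemize}
\item Newton-type (log-concavity) inequalities for the coefficients of $e^{\sigma z}\prod_i(1-a_iz)^{-1}$ only say that $k\mapsto G_{k-1}/G_k$ is non-decreasing at a fixed $(a,\sigma)$; this is Lemma~\ref{lem:ratio-monotonicity}. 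They carry no information about how $G_C/G_{C-1}$ varies jointly in the parameters.
\item Convexity of the Erlang loss function $B(C,\rho)$ is a one-variable fact. With homogeneity it does settle $N=1$, since $\phi(\alpha,\rho)=\alpha\,\phi(1,\rho/\alpha)$ with $\phi(1,\rho)=1/(1-B(C,\rho))$. It says nothing about joint convexity in $N+1$ variables. Note also that you need convexity of $\phi$ itself; a convexity statement about $1/\phi$ would not imply it.
\item Quasiconvexity along rays is vacuous, because a degree-one homogeneous function is linear on every ray. Homogeneity only tells you that convexity of $\phi$ is equivalent to convexity of $\{\phi\le 1\}$, which is exactly what remains to be shown.
\end{itemize}

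The paper closes this step with a classical closed-network fact. The ratio $h_C/h_{C-1}$ of complete homogeneous symmetric polynomials in $m$ variables is jointly convex on the nonnegative orthant (Theorem 3.26 in \cite{chen2001fundamentals}); this ratio is the reciprocal throughput of a closed network of $m$ single-server stations. The infinite-server transit station is then handled by replacing it with $M$ single-server stations of load $\sigma/M$ each. This gives $h_k(a,\sigma/M,\ldots,\sigma/M)=\sum_{l=0}^k h_{k-l}(a)\binom{M+l-1}{l}(\sigma/M)^l\to G_k(a,\sigma)$ as $M\to\infty$; in your generating-function language, $(1-\sigma z/M)^{-M}\to e^{\sigma z}$. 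Hence $\phi$ composed with the linear map $(a,u)\mapsto(a,\sum u_{ij}/\mu^{ij})$ is a pointwise limit of convex functions and is therefore convex. Your generating-function remark points toward exactly this approximation. Without the convexity of $h_C/h_{C-1}$, cited or proved independently, the feasible region of your program is not shown to be convex, so the proposal does not yet prove the theorem.
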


\begin{proof}{Proof.}
For $ \bm z = (z_{ij})_{(i,j) \in A} $,  we  define  $   Q_C(\bm \omega, \bm z): = \frac{\sum_{\bm x \in \mathcal X_{C,N}} \prod_{i=1}^N \omega_i^{x_i}  \prod_{(i,j)\in\A} \frac{1}{x_{ij}! }  \left( z_{ij}/\mu^{ij} \right)^{x_{ij}}}{\sum_{ \bm x \in \mathcal{X}_{C-1,N}} \prod_{i=1}^N \omega_i^{x_i} \prod_{(i,j)\in\A} \frac{1}{x_{ij}! }  \left( z_{ij}/ \mu^{ij} \right)^{x_{ij}} } . $
We first show in Lemma~\ref{lem:static-equivalence} that the optimal static revenue equals the optimal objective value of the following optimization problem:
\begin{equation}\label{opt:convex-program}
\begin{aligned}
\max_{\bm\omega \geq \bm{0} ,\bm z}\quad
&\sum_{(i,j)\in\A}\omega_i r^{ij}\left(\frac{z_{ij}}{\omega_i}\right)\\
\mathrm{s.t.}\quad
&\sum_{j:(i,j)\in\A}z_{ij}= \sum_{k:(k,i) \in\A} z_{ki} &&i\in[N],\\
&0\leq z_{ij}\leq\Lambda_{ij}\omega_i &&(i,j)\in \A,\\
&Q_C(\bm\omega,\bm z)\leq1.
\end{aligned} 
\end{equation}

\begin{lemma}\label{lem:static-equivalence}
The optimal static revenue $\R^{sta*}$ equals the optimal objective value of \eqref{opt:convex-program}.
\end{lemma}

Since $r^{ij}(\cdot)$ is concave, the objective of \eqref{opt:convex-program} is concave by the perspective property. Moreover, we observe in Lemma~\ref{lem:Q_C convex} that $Q_C(\bm\omega,\bm z)$ is convex. Therefore, \eqref{opt:convex-program} is a convex optimization problem, completing the proof of Theorem \ref{thm:convex-static-program}.

\begin{lemma}\label{lem:Q_C convex}
   $Q_C (\bm \omega, \bm z) $ is convex.
\end{lemma}

\begin{proof}{Proof of Lemma~\ref{lem:static-equivalence}.}
($\Rightarrow$)
For any static policy $\bm\lambda$ and $ \bm \omega $ defined in \eqref{eq:static-rev}, let $z_{ij}=\omega_i\lambda^{ij}$.  By the traffic equations and $0 \leq \lambda^{ij} \leq \Lambda_{ij}$, we have that $ \sum_{j:(i,j)\in\A}z_{ij}
=\omega_i\lambda_i
=\sum_{k:(k,i)\in\A}\omega_k\lambda^{ki}
=\sum_{k:(k,i)\in\A}z_{ki} $ and  $
0\leq z_{ij}\leq\Lambda_{ij}\omega_i $. Thus, $(\bm\omega,\bm z)$ satisfies the first two constraints of \eqref{opt:convex-program}.  To satisfy the last constraint, let $t=1/Q_C(\bm\omega,\bm z)>0$. Since $Q_C$ is homogeneous of degree one, $Q_C(t\bm\omega,t\bm z)=1$. Scaling both $\bm\omega$ and $\bm z$ by $t$ also preserves the first two constraints, implying that $(t\bm\omega, t \bm z)$ is feasible for \eqref{opt:convex-program}, with objective value $ \sum_{(i,j)\in\A}t\omega_i r^{ij}\left(\frac{tz_{ij}}{t\omega_i}\right)
=\frac{\sum_{(i,j)\in\A}\omega_i r^{ij}(\lambda^{ij})}{Q_C(\bm\omega,\bm z)}
=\R^{sta}(\bm\lambda) $, where the last equality follows from \eqref{eq:static-rev}. Thus, the optimal objective value of \eqref{opt:convex-program} is at least $\R^{sta*}$.

($\Leftarrow$) 
Conversely, consider any feasible $(\bm\omega, \bm z)$ and let $\lambda^{ij}=z_{ij}/\omega_i$ when $\omega_i>0$ and $\lambda^{ij}=0$ otherwise. The first two constraints ensure that $\bm\lambda$ satisfies $0\leq\lambda^{ij}\leq\Lambda_{ij}$ and $\omega_i\lambda_i=\sum_{k:(k,i)\in\A}\omega_k\lambda^{ki}$. By \eqref{eq:static-rev}, the corresponding static revenue is $ \R^{sta}(\bm\lambda)
=\frac{\sum_{(i,j)\in\A}\omega_i r^{ij}(z_{ij}/\omega_i)}{Q_C(\bm\omega,\bm z)}
\geq\sum_{(i,j)\in\A}\omega_i r^{ij}(z_{ij}/\omega_i) $,  where the inequality follows from $Q_C(\bm\omega,\bm z) \leq 1$. Since $ \R^{sta}(\bm\lambda) \leq \R^{sta*} $,  the optimal objective value of \eqref{opt:convex-program} is at most $\R^{sta*}$. 
\end{proof}


\end{proof}

\begin{proof}{Proof of Lemma~\ref{lem:Q_C convex}.}

We show that $Q_C(\bm\omega,\bm z)$ is convex (with the continuous extension $Q_C(\bm0,\bm0)=0$). For any integer $ m \ge1 $, let $h_k(\omega_1,\ldots, \omega_m) := \sum_{x_1+\cdots+x_m=k}\prod_{l=1}^m \omega_l^{x_l} $. Theorem 3.26 in \cite{chen2001fundamentals} shows that $\frac{h_C(\omega_1, \ldots,\omega_m)}{h_{C-1}(\omega_1, \ldots,\omega_m)} $ is jointly convex in $(\omega_1,\ldots, \omega_m)$.  For integer $M\geq1$, we consider 
\begin{align*}
Q_C^{M}(\bm\omega,\bm z) :=
\frac{ h_C \left(\omega_1,\ldots,\omega_N,
\frac{1}{M}\sum_{(i,j)\in\A}\frac{z_{ij}}{\mu^{ij}},\ldots,
\frac{1}{M}\sum_{(i,j)\in\A}\frac{z_{ij}}{\mu^{ij}} \right) }{
h_{C-1}\left(\omega_1,\ldots,\omega_N,
\frac{1}{M} \sum_{(i,j)\in\A}\frac{z_{ij}}{\mu^{ij}},\ldots,
\frac{1}{M}\sum_{(i,j)\in\A}\frac{z_{ij}}{\mu^{ij}}\right) },
\end{align*}
where $ \frac{1}{M}\sum_{(i,j)\in\A}\frac{z_{ij}}{\mu^{ij}} $ is repeated $M$ times. Since $Q_C^M$ is obtained by composing the convex function $h_C/h_{C-1}$ with a linear mapping of $(\bm \omega, \bm z)$, $Q_C^M$ is convex in $(\bm\omega,\bm z)$. We next show that $Q_C^{M}$ converges to $Q_C$. For $k\geq0$, note that
\begin{align*}
    h_k \left(\omega_1,\ldots,\omega_N,
\frac{1}{M}\sum_{(i,j)\in\A}\frac{z_{ij}}{\mu^{ij}},\ldots,
\frac{1}{M}\sum_{(i,j)\in\A}\frac{z_{ij}}{\mu^{ij}} \right) = \sum_{i=0}^k h_{k-i}(\omega_1,\ldots,\omega_N) \binom{M+i-1}{i} \left[ \sum_{(i,j)\in\A} \frac{z_{ij}}{M\mu^{ij}} \right]^i.
\end{align*}
Because $ \lim_{M \to \infty} \binom{M+i-1}{i}M^{-i} = 1/i!$, we have that
\begin{align*}
\lim_{M \to \infty}  h_k \left(\omega_1,\ldots,\omega_N,
\frac{1}{M}\sum_{(i,j)\in\A}\frac{z_{ij}}{\mu^{ij}},\ldots,
\frac{1}{M}\sum_{(i,j)\in\A}\frac{z_{ij}}{\mu^{ij}} \right) =& \sum_{i=0}^k h_{k-i}(\omega_1,\ldots,\omega_N) \frac{1}{i!} \left( \sum_{(i,j)\in\A} \frac{z_{ij}}{\mu^{ij}} \right)^i \\
=& \sum_{\bm x\in\mathcal X_{k,N}} \prod_{i=1}^N \omega_i^{x_i} \prod_{(i,j)\in\A} \frac{1}{x_{ij}!} \left(\frac{z_{ij}}{\mu^{ij}}\right)^{x_{ij}},
\end{align*}
where the last equality follows from the multinomial theorem. Applying the above convergence with $k=C$ and $k=C-1$ yields that $ \lim_{M\to\infty}Q_C^{M}(\bm\omega,\bm z) = Q_C(\bm\omega,\bm z) $. Thus, for any two points $(\bm\omega^1,\bm z^1)$ and $(\bm\omega^2,\bm z^2)$ and any $\theta\in[0,1]$, convexity of $Q_C^{M}$ shows that
\begin{align*}
Q_C\left( \theta\bm\omega^1+(1-\theta)\bm\omega^2,
\theta\bm z^1+(1-\theta)\bm z^2 \right)
=& \lim_{M\to\infty} Q_C^{M}\left( \theta\bm\omega^1 + (1-\theta)\bm\omega^2, \theta\bm z^1+(1-\theta)\bm z^2 \right)\\
\leq & \theta \cdot \lim_{M\to\infty}Q_C^{M}(\bm \omega^1, \bm z^1) + (1-\theta) \cdot \lim_{M\to\infty}Q_C^{M}(\bm\omega^2,\bm z^2)\\
=& \theta Q_C(\bm\omega^1,\bm z^1) + (1-\theta)Q_C(\bm \omega^2, \bm z^2).
\end{align*}
Hence, $Q_C(\bm \omega, \bm z)$ is convex. \hfill \Halmos

\end{proof}

For any optimal solution $(\bm\omega^*,\bm z^*)$ of \eqref{opt:convex-program}, an optimal static pricing policy is recovered as $ \lambda^{ij} = z_{ij}^* / \omega_i^*$ when $ \omega_i^* >0 $, and $\lambda^{ij}=0$ otherwise.
Although $Q_C(\bm\omega,\bm z)$ is defined through the product-form expressions, it can be evaluated without enumerating the state space $\mathcal{X}_{C,N}$. Initialize $G_k^{(0)}=(\sum_{(i,j)\in\A} \frac{ z_{ij} }{ \mu^{ij} })^k/k! $ and recursively compute $ G_k^{(i)} = G_k^{(i-1)} + \omega_iG_{k-1}^{(i)} $, $i=1,\ldots,N, \ k=0,\ldots,C$ with $G_{-1}^{(i)}=0$. Then, $Q_C(\bm\omega,\bm z) = G_C^{(N)}/G_{C-1}^{(N)}$. Hence, $Q_C$ and its gradient can be evaluated in $O(NC+|\A|)$, which indicates that \eqref{opt:convex-program} can be solved using standard convex optimization methods.

\subsection{Line Search over the Flow Relaxation}\label{sec:balanced-comparison}

\citet{benjaafar2023pricing} propose a line-search method to compute a balanced static policy. They solve the flow relaxation \eqref{opt:fluid-program} multiple times by varying $\Delta\in[0,\sum_{(i,j)\in\A}\Lambda^{ij}/\mu^{ij}]$ and select the resulting static policy with the highest revenue. They show that the line search achieves the highest revenue among static policies satisfying the balance constraints $\sum_{j:(i,j)\in\A}\lambda^{ij}=\sum_{k:(k,i)\in\A}\lambda^{ki}$, $i\in[N]$. We refer to this revenue as the best balanced static revenue, which may be lower than the optimal static revenue. In contrast, our formulation \eqref{opt:convex-program} computes the optimal static policy without imposing the balance constraints by solving a single convex optimization problem.

We conduct numerical experiments to quantify the improvement of the optimal static policy over the best balanced static policy. We consider linear demand, $p^{ij}(\lambda)=b^{ij}-a^{ij}\lambda$, and exponential demand, $p^{ij}(\lambda)=a^{ij}\log(b^{ij}/(a^{ij}\lambda))$ with $ \Lambda_{ij} =b^{ij}/a^{ij} $. We use $N\in\{5,10,15,20\}$ and $C\in\{5,20,40\}$ and generate 100 independent instances for each pair $(N,C)$ and each demand family. Independently across arcs, we draw $a^{ij}$ uniformly from $[0.1,5]$, $b^{ij}$ uniformly from $[0.5,10]$, and $\mu^{ij}$ uniformly from $[0.2,5]$. For each instance, we compute the optimal static revenue from \eqref{opt:convex-program} and the best balanced static revenue using line search of $\Delta$ over \eqref{opt:fluid-program}.


\begin{table}[ht]
\centering
\caption{Revenue ratio of the best balanced static policy relative to the optimal static policy}
\label{tab:offered-balanced}
\begin{threeparttable}
\small
\setlength{\tabcolsep}{2.5pt}
\begin{tabular}{cccccc@{\hspace{0.8em}\vrule\hspace{0.8em}}cccccc}
\toprule
& & \multicolumn{2}{c}{Linear} & \multicolumn{2}{c}{Exponential}
& & \multicolumn{2}{c}{Linear} & \multicolumn{2}{c}{Exponential} \\
\cmidrule(lr){3-4}\cmidrule(lr){5-6}
\cmidrule(lr){9-10}\cmidrule(lr){11-12}
$N$ & $C$ & Min. & Avg. & Min. & Avg.
& $N$ & $C$ & Min. & Avg. & Min. & Avg. \\
\midrule
5  & 5  & 80.66\% & 94.73\% & 89.30\% & 97.04\%
& 15 & 5  & 80.15\% & 94.75\% & 95.62\% & 97.99\% \\
   & 20 & 92.02\% & 97.90\% & 96.97\% & 99.08\%
&    & 20 & 88.10\% & 96.59\% & 98.04\% & 99.12\% \\
   & 40 & 95.77\% & 98.68\% & 98.90\% & 99.55\%
&    & 40 & 95.59\% & 98.08\% & 99.19\% & 99.56\% \\
\midrule
10 & 5  & 71.08\% & 94.29\% & 94.70\% & 97.76\%
& 20 & 5  & 82.55\% & 94.80\% & 96.49\% & 98.25\% \\
   & 20 & 92.82\% & 97.43\% & 97.40\% & 99.13\%
&    & 20 & 89.35\% & 96.76\% & 98.75\% & 99.26\% \\
   & 40 & 94.94\% & 98.30\% & 99.02\% & 99.54\%
&    & 40 & 95.19\% & 97.93\% & 98.99\% & 99.54\% \\
\bottomrule
\end{tabular}
\begin{tablenotes}[flushleft]
\footnotesize
\item \textit{Note.} We consider a complete directed network with $N(N-1)$ arcs for each $N$.
\end{tablenotes}
\end{threeparttable}
\end{table}

Table~\ref{tab:offered-balanced} reports the best balanced static revenue as a percentage of the optimal static revenue. The average ratio ranges from $94.29\%$ to $99.56\%$, while the smallest observed ratio is $71.08\%$. For every fixed $N$ and both demand families, the minimum and average ratios increase with $C$. The ratios are also consistently higher under exponential demand than under linear demand.



\section{Supply Redirection}

In this section, we extend our guarantee of static policy in Theorem~\ref{thm:main-guarantee} to supply redirection. Following \cite{banerjee2022pricing}, a vehicle may either remain at the destination or be redirected (sent empty) to another location after completing a customer trip. Supply redirection provides an additional means of mitigating spatial imbalance in vehicle availability. We assume that each redirection from location $i$ to location $j$ incurs a cost $c_{ij}\geq 0$ and has a travel time following a general distribution with mean $1/\nu^{ij}$.


With supply redirection, the state space further tracks the number of vehicles redirecting on each pair $(i,j)$ and their travel time spent so far. We denote the resulting system state space by $\tilde{\mathcal S}_{C,N}$. A dynamic policy chooses both the pricing policy and the redirection decisions based on the system state. Let $\eta_{\bm s}^{ij}$ denote the probability that a vehicle completing a customer trip at $i$ is immediately redirected to $j$ in state $s\in\tilde{\mathcal S}_{C,N}$, where $\sum_{j \in [N] }\eta_{\bm s}^{ij} \leq 1$. In contrast, a static policy offers fixed trip prices and fixed redirection probabilities  at all times, independent of the system state. For any pricing and redirection policy $(\bm\lambda, \bm\eta)$, let $u_{ij}(\bm\lambda, \bm\eta)$ denote the long-run average throughput of redirection trips from $i$ to $j$. The objective is revenue minus redirection cost:
\begin{align*}
\R(\bm\lambda, \bm\eta) = \sum_{(i,j)\in\A} \int_{\tilde{\mathcal S}_{C,N}} \mathbf{1}\{x_i \geq 1\}r^{ij}(\lambda_{\bm s}^{ij}) d \Pi^{\bm\lambda, \bm\eta}( \bm{s})-\sum_{i,j\in [N] }c_{ij} u_{ij}(\bm\lambda,\bm\eta). 
\end{align*}
We assume that an optimal dynamic policy induces an ergodic system and a strongly connected network and let $(\bm\lambda^*,\bm\eta^*)$ denote such an optimal policy. Similar to Section~\ref{sec:main-result}, we show the guarantee of a static policy through a constructed one from the optimal dynamic policy. Formally, the static price $ \tilde\lambda^{ij} $ (average effective arrival rates when vehicles are available at $i$) and redirection probability $\tilde\eta^{ij} $ (average redirection probability upon trip completion at $i$) are defined as
\begin{align}\label{eq:rebalancing-static-construction} 
\tilde\lambda^{ij}=\frac{v_{ij}(\bm\lambda^*,\bm\eta^*)}{\alpha_i(\bm\lambda^*,\bm\eta^*)}, \quad (i,j) \in \A, \quad 
\tilde\eta^{ij}=\frac{u_{ij}(\bm\lambda^*,\bm\eta^*)}{\sum_{k:(k,i)\in\A}v_{ki}(\bm\lambda^*,\bm\eta^*)}, \quad i,j\in[N],
\end{align}
where $v_{ij}(\bm\lambda^*,\bm\eta^*)= \int_{\tilde{\mathcal{S} }_{C,N}} \mathbf 1\{x_i\geq 1\}\lambda_{\bm s}^{ij*}  d \Pi^{\bm\lambda^*,  \bm\eta^*} \! (\bm s ) $ is the throughput of $(i,j)$ trip and $ \alpha_i(\bm\lambda^*,\bm\eta^*) $ is the availability at location $i$ under the optimal dynamic policy. We set $\tilde\eta^{ij}=0$ if  the denominator is zero. Let $ \tilde{\bm \lambda} = (\tilde{\lambda}^{ij})_{(i,j) \in \A} $ and $\tilde{\bm \eta} = (\tilde\eta^{ij})_{i,j \in [N]} $ denote this static policy. Despite the additional dynamic redirection decisions, we show that a static policy still admits the same $ \gamma_{C,N} $ guarantee.


\begin{theorem}\label{thm:supply-redirection}
For the joint pricing and redirection problem, our static policy $(\tilde{\bm\lambda}, \tilde{\bm\eta})$ guarantees
\begin{align*}
\frac{\R(\tilde{\bm\lambda}, \tilde{\bm\eta})}{\R( \bm \lambda^*, \bm \eta^* )} \geq\gamma_{C,N} \geq\frac{C}{2C-1}>\frac{1}{2}. 
\end{align*}
\end{theorem}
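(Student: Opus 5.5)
The plan is to mirror the proof of Theorem~\ref{thm:main-guarantee}: apply Jensen to the pricing part, prove a common availability ratio under the constructed static policy, and reduce to the minimization problem \eqref{min: ratio}. The one new ingredient is the redirection cost, which I will show scales by the same common ratio.

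\textbf{Step 1: the static policy is a product-form network.} Under $(\tilde{\bm\lambda},\tilde{\bm\eta})$ the system is a closed BCMP network. It has a single-server queue at each location $i$ with rate $\tilde\lambda_i$, an infinite-server trip queue on each arc $(i,j)\in\A$ with mean $1/\mu^{ij}$, and an infinite-server redirection queue for each pair $(j,l)$ with mean $1/\nu^{jl}$. A vehicle finishing a trip at $j$ enters redirection queue $(j,l)$ with probability $\tilde\eta^{jl}$ and otherwise stays at $j$. Let $v_{ij}^*=v_{ij}(\bm\lambda^*,\bm\eta^*)$, $u_{ij}^*=u_{ij}(\bm\lambda^*,\bm\eta^*)$ and $\alpha_i^*=\alpha_i(\bm\lambda^*,\bm\eta^*)$.

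I would verify that the following visit rates solve the traffic equations: $v_{ij}^*$ for trip queue $(i,j)$, $u_{jl}^*$ for redirection queue $(j,l)$, and $\tilde\lambda_i\alpha_i^*=\sum_j v_{ij}^*$ for location $i$. Two facts are needed.
\begin{itemize}
\item Vehicle flow balance at each location under the optimal dynamic policy: outflow $\sum_j v_{ij}^*$ equals trip inflow minus redirected-away flow plus redirected-in flow, $\sum_k v_{ki}^*-\sum_l u_{il}^*+\sum_k u_{ki}^*$.
\item The identity $u_{jl}^*=\tilde\eta^{jl}\sum_k v_{kj}^*$, which holds by the definition \eqref{eq:rebalancing-static-construction}.
\end{itemize}
Insensitivity then gives the product form. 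The per-location weight is $(\alpha_i^*)^{x_i}$ up to a common constant, since $w_i/\tilde\lambda_i=\alpha_i^*$. The transit weights are $(v_{ij}^*/\mu^{ij})^{x_{ij}}/x_{ij}!$ and $(u_{jl}^*/\nu^{jl})^{y_{jl}}/y_{jl}!$.

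\textbf{Step 2: common ratio and the objective.} Exactly as in Lemma~\ref{lem: RevRatio_lower}, define $\alpha:=\alpha_i(\tilde{\bm\lambda},\tilde{\bm\eta})/\alpha_i^*$. By the multinomial collapse this equals $G_{C-1}(\bm\alpha^*,\rho)/G_C(\bm\alpha^*,\rho)$ for every $i$, where the total load is $\rho=\sum_{\A}v_{ij}^*/\mu^{ij}+\sum_{j,l}u_{jl}^*/\nu^{jl}$.

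The static throughputs are then $\tilde\lambda^{ij}\alpha_i(\tilde{\bm\lambda},\tilde{\bm\eta})=\alpha v_{ij}^*$. The static redirection throughputs are $u_{jl}(\tilde{\bm\lambda},\tilde{\bm\eta})=\tilde\eta^{jl}\sum_k\alpha v_{kj}^*=\alpha u_{jl}^*$. Hence
\[
\R(\tilde{\bm\lambda},\tilde{\bm\eta})=\alpha\Bigl[\sum_{\A}\alpha_i^* r^{ij}(\tilde\lambda^{ij})-\sum c_{jl}u_{jl}^*\Bigr].
\]
By Jensen as in Lemma~\ref{lem: RevRatio_lower}, the bracket is at least $\R(\bm\lambda^*,\bm\eta^*)$, so $\R(\tilde{\bm\lambda},\tilde{\bm\eta})\geq\alpha\,\R(\bm\lambda^*,\bm\eta^*)$. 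Here we may assume $\R(\bm\lambda^*,\bm\eta^*)>0$; otherwise the claim is trivial, since the zero-price, no-redirection static policy yields nonnegative value. The cost term scaling linearly in $\alpha$ is what makes the subtraction harmless.

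\textbf{Step 3: reduce to the earlier optimization.} Little's law on the optimal dynamic policy, counting idle vehicles plus trip transit plus redirection transit, gives $\sum_i\alpha_i^*+\rho\leq C$. So $(\bm\alpha^*,\rho)$ is feasible for \eqref{min: ratio}. Lemmas~\ref{lem:availability-ratio-property} and \ref{lem: solution} then give $\alpha\geq\gamma_{C,N}$, and Lemma~\ref{lem: lowerbound-gamma} gives $\gamma_{C,N}\geq C/(2C-1)>1/2$.

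\textbf{Main obstacle.} The hard part is Step 1. It requires checking that the redirection routing fits the BCMP framework: routing decided at trip completion and depending only on the destination is Markovian routing. It also requires that the traffic solution carry exactly $\alpha_i^*$ at each location, including when redirections originate at nodes with zero inflow; those nodes have $\tilde\eta=0$, which is consistent because they then carry no redirection flow. Finally, redirection travel times are general, so insensitivity must be invoked for the added infinite-server stations.
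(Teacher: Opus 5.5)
Your proposal is correct and follows essentially the same route as the paper. It uses the same traffic-equation solution ($w_i=\tilde\lambda_i\alpha_i^*$, $w_{ij}=v_{ij}^*$, $q_{ij}=u_{ij}^*$), which gives a common ratio $G_{C-1}/G_C$ for availabilities, trip throughputs and redirection throughputs (the paper's Lemma~\ref{lem:redirection-common-ratio}); it then applies Jensen to the pricing term with the cost term scaling by that same ratio, and uses Little's law to place $(\bm\alpha^*,\rho)$ in the feasible set of \eqref{min: ratio}, with your side remark on the sign of $\R(\bm\lambda^*,\bm\eta^*)$ being a small extra care the paper leaves implicit.
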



\begin{proof}{Proof.}
According to the definitions of $ \R(\tilde{\bm\lambda}, \tilde{\bm\eta}) $ and $\R( \bm \lambda^*, \bm \eta^* )$,
\begin{align*}
 \frac{\R(\tilde{\bm\lambda}, \tilde{\bm\eta})}{\R( \bm \lambda^*, \bm \eta^* )} 
 =& \frac{ \sum_{(i,j)\in \A} \alpha_i(\tilde{\bm\lambda}, \tilde{\bm\eta} ) r^{ij}( \tilde{\lambda}^{ij})  - \sum_{i,j\in [N] } c_{ij} u_{ij}(\tilde{\bm\lambda}, \tilde{\bm\eta}) }{ \sum_{(i,j)\in\A} \int_{\tilde{\mathcal S}_{C,N}} \mathbf{1}\{x_i \geq 1\}r^{ij}(\lambda_{\bm s}^{ij*}) d \Pi^{\bm{\lambda}^*, \bm{\eta}^* }\! (\bm{s}) - \sum_{i,j\in [N] }c_{ij} u_{ij}(\bm{\lambda}^*,\bm{\eta}^*)}  \\
\geq& \frac{ \sum_{(i,j)\in \A} \alpha_i(\tilde{\bm\lambda}, \tilde{\bm\eta} ) r^{ij}( \tilde{\lambda}^{ij})  - \sum_{i,j\in [N] } c_{ij} u_{ij}(\tilde{\bm\lambda}, \tilde{\bm\eta})   }{\sum_{(i,j)\in\A}\alpha_i(\bm\lambda^*, \bm\eta^*)r^{ij}(\tilde\lambda^{ij}) - \sum_{i,j \in [N] } c_{ij} u_{ij}(\bm\lambda^*,\bm\eta^*),  }  \\
=& \frac{G_{C-1}(\bm\alpha(\bm\lambda^*,\bm\eta^*),\rho(\bm\lambda^*,\bm\eta^*))}{G_C(\bm\alpha(\bm\lambda^*,\bm\eta^*),\rho(\bm\lambda^*,\bm\eta^*))} 
\end{align*}
where the inequality follows from Jensen's inequality and \eqref{eq:rebalancing-static-construction}, and the second equality follows from Lemma~\ref{lem:redirection-common-ratio} below, which shows that the availability and throughput ratios are the same across all locations and arcs, analogous to the observation in Lemma~\ref{lem: RevRatio_lower}. Let $\rho(\bm \lambda^*, \bm\eta^*  ):= \sum_{(i,j) \in \A } \frac{v_{ij}(\bm \lambda^*, \bm\eta^* )}{\mu^{ij} } + \sum_{i,j \in [N]} \frac{u_{ij}(\bm\lambda^*,\bm\eta^*)}{\nu^{ij} }  $ and recall that $G_C(\cdot)$ is defined in \eqref{eq:G_C}.


\begin{lemma}\label{lem:redirection-common-ratio}
Under the constructed static policy $(\tilde{\bm\lambda},\tilde{\bm\eta})$, it holds that
\begin{align*}
\frac{\alpha_i(\tilde{\bm\lambda}, \tilde{\bm\eta})}{\alpha_i(\bm\lambda^*,\bm\eta^*)}=\frac{G_{C-1}(\bm\alpha(\bm\lambda^*,\bm\eta^*),\rho(\bm\lambda^*,\bm\eta^*))}{G_C(\bm\alpha(\bm\lambda^*,\bm\eta^*),\rho(\bm\lambda^*,\bm\eta^*))}
=\frac{v_{kl}(\tilde{\bm\lambda},\tilde{\bm\eta})}{v_{kl}(\bm\lambda^*,\bm\eta^*)}
=\frac{u_{pq}(\tilde{\bm\lambda},\tilde{\bm\eta})}{u_{pq}(\bm\lambda^*,\bm\eta^*)}, \ \forall i\in[N], (k,l)\in\A, p,q \in [N].
\end{align*}
\end{lemma}

Finally, under the optimal dynamic policy $(\bm\lambda^*,\bm\eta^*)$, since the expected number of idle vehicles at location $i$ is at least $\alpha_i(\bm\lambda^*,\bm\eta^*)$, while the expected numbers of vehicles traveling on trip $(i,j)$ and on redirection $(i,j)$ are $v_{ij}(\bm\lambda^*,\bm\eta^*)/\mu^{ij}$ and $u_{ij}(\bm\lambda^*,\bm\eta^*)/\nu^{ij}$, Little's law implies that $\sum_{i=1}^N\alpha_i(\bm\lambda^*,\bm\eta^*)+\rho(\bm\lambda^*,\bm\eta^*)\leq C$. Thus, $(\bm\alpha(\bm\lambda^*,\bm\eta^*),\rho(\bm\lambda^*,\bm\eta^*))$ is feasible for the same problem \eqref{min: ratio}. Lemma~\ref{lem: solution} and \eqref{eq: gamma} then show that $  \frac{\R(\tilde{\bm\lambda}, \tilde{\bm\eta})}{\R( \bm \lambda^*, \bm \eta^* )}  \geq  \frac{G_{C-1}(\bm\alpha(\bm\lambda^*, \bm\eta^*),\rho(\bm\lambda^*,\bm\eta^*))}{G_C(\bm\alpha(\bm\lambda^*, \bm\eta^*),\rho(\bm\lambda^*, \bm\eta^*))}\geq\gamma_{C,N}$, which completes the proof. \hfill \Halmos
\end{proof}

Note that the optimal static policy can be computed by solving the following convex optimization problem. Since its derivation follows the same steps as in Section~\ref{sec:opt-static}, we omit the proof.
\begin{equation}\label{opt:convex-program-redirection}
\begin{aligned}
\max_{\bm\omega\geq\bm 0,\bm z,\bm q\geq\bm 0}\quad
&\sum_{(i,j)\in\A}\omega_i r^{ij}\left(\frac{z_{ij}}{\omega_i}\right)-\sum_{i,j\in[N]}c_{ij}q_{ij}\\
\mathrm{s.t.}\quad
&\sum_{j:(i,j)\in\A}z_{ij}+\sum_{j\in[N]}q_{ij}=\sum_{k:(k,i)\in\A}z_{ki}+\sum_{k\in[N]}q_{ki} &&i\in[N],\\
&\sum_{j\in[N]}q_{ij}\leq\sum_{k:(k,i)\in\A}z_{ki} &&i\in[N],\\
&0\leq z_{ij}\leq\Lambda_{ij}\omega_i &&(i,j)\in\A,\\
&Q_C(\bm\omega,\bm z,\bm q)\leq1,
\end{aligned}
\end{equation}
where $Q_C(\bm\omega, \bm z, \bm q):= \frac{G_C\left(\bm\omega, \sum_{(i,j)\in\A} z_{ij}/\mu^{ij} + \sum_{i,j\in[N]} q_{ij}/\nu^{ij} \right)}{G_{C-1}\left(\bm\omega,\sum_{(i,j)\in\A} z_{ij}/\mu^{ij} + \sum_{i,j\in[N]} q_{ij}/\nu^{ij} \right)}  $. For an optimal solution $(\bm\omega^*, \bm z^*, \bm q^* )$ of \eqref{opt:convex-program-redirection}, an optimal static policy is $ \lambda^{ij} = z_{ij}^* /\omega_i^*$ for $(i,j) \in \A $ and $\eta^{ij}=\frac{q_{ij}^*}{\sum_{k:(k,i)\in\A}z_{ki}^*} $ for $ i,j \in [N] $.



\begin{APPENDICES}

\section{Omitted Proofs}\label{app:proofs}

\begin{proof}{\textbf{Proof of Lemma~\ref{lem:little's law}}.}
For each $(i,j)\in\A$, vehicles start traveling on trip $(i,j)$ at the long-run average rate $v_{ij}(\bm\lambda^*)$, and each such trip has mean travel time $1/\mu^{ij}$. Therefore, applying Little's law to the vehicles traveling on $(i,j)$, the expected number of vehicles on this trip under $\bm\lambda^*$ is $ \sum_{\bm x\in\mathcal X_{C,N}}x_{ij}\mathbb P_{\bm x}(\bm\lambda^*)=\frac{v_{ij}(\bm\lambda^*)}{\mu^{ij}} $. At location $i$, the expected number of idle vehicles is $\sum_{\bm x\in\mathcal X_{C,N}}x_i \mathbb P_{\bm x}(\bm\lambda^*)$. Since $x_i\geq1$ whenever location $i$ is available, $  \sum_{\bm x\in\mathcal X_{C,N}}x_i\mathbb P_{\bm x}(\bm\lambda^*)\geq\sum_{\bm x\in\mathcal X_{C,N}:x_i\geq1}\mathbb P_{\bm x}(\bm\lambda^*)=\alpha_i(\bm\lambda^*)$. Because the system contains exactly $C$ vehicles at all times, the expected numbers of idle and traveling vehicles sum to $C$. Hence,
\begin{align*} 
C =\sum_{i=1}^N\sum_{\bm x\in\mathcal X_{C,N}} x_i\mathbb P_{\bm x}(\bm\lambda^*) + \sum_{(i,j)\in\A}\sum_{\bm x\in\mathcal X_{C,N}}x_{ij}\mathbb P_{\bm x}(\bm\lambda^*) 
\geq\sum_{i=1}^N\alpha_i(\bm\lambda^*)+\sum_{(i,j)\in\A}\frac{v_{ij}(\bm\lambda^*)}{\mu^{ij}} = \sum_{i=1}^N \alpha_i(\bm\lambda^*) + \rho(\bm\lambda^*). \hfill \Halmos 
 \end{align*}
\end{proof}

\begin{proof}{\textbf{Proof of Lemma~\ref{lem:availability-ratio-property}}.}

We first prove in Lemma \ref{lem:ratio-monotonicity} that $\frac{G_{k-1}(\bm\alpha,\rho)}{G_k(\bm\alpha,\rho)}$ is non-decreasing in $k$.

\begin{lemma}\label{lem:ratio-monotonicity}
For every $\bm\alpha\in\mathbb R_+^N$ and $\rho\geq 0$, $G_{k-1}(\bm\alpha,\rho)/G_k(\bm\alpha,\rho)$ is non-decreasing in $k$. 
\end{lemma}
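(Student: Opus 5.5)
The claim that $G_{k-1}(\bm\alpha,\rho)/G_k(\bm\alpha,\rho)$ is non-decreasing in $k$ is equivalent to log-concavity of the sequence $(G_k)_{k\ge 0}$, i.e.\ $G_k^2 \geq G_{k-1}G_{k+1}$ for all $k\geq 1$. Here $G_{-1}:=0$, and the case $k=0$ is trivial since $G_{-1}/G_0=0\le G_0/G_1$. So the plan is to show log-concavity and then divide through, after handling degenerate zeros.

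The key observation is the generating function
\[
\sum_{k\ge0} G_k(\bm\alpha,\rho)\, t^k = e^{\rho t}\prod_{i=1}^N \frac{1}{1-\alpha_i t}.
\]
This exhibits $(G_k)$ as the convolution of the Poisson-type sequence $(\rho^k/k!)_{k\ge0}$ with the geometric sequences $(\alpha_i^k)_{k\ge0}$, $i\in[N]$. Concretely, the inner sum $\sum_{x_1+\cdots+x_N=m}\prod_i\alpha_i^{x_i}$ is exactly the $N$-fold convolution of geometric sequences, which one can also see directly from the definition of $G_C$ in \eqref{eq:G_C}.

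Each factor sequence is log-concave and has no internal zeros:
\begin{itemize}
\item For the Poisson-type factor, $(\rho^k/k!)^2 \ge \rho^{k-1}/(k-1)!\cdot \rho^{k+1}/(k+1)!$ reduces to $k+1\ge k$.
\item For each geometric factor, $\alpha^{2k}=\alpha^{k-1}\alpha^{k+1}$, so equality holds.
\end{itemize}
The classical theorem (Hoggar/Ibragimov) that the convolution of log-concave nonnegative sequences without internal zeros is again log-concave without internal zeros then gives $G_k^2\ge G_{k-1}G_{k+1}$.

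If citing that theorem is undesirable, I would prove the single-factor step directly, by induction on the number of geometric factors. Convolving a log-concave sequence $a$ with the geometric sequence $\alpha^k$ produces $b_k=a_k+\alpha b_{k-1}$. One then checks $b_k^2-b_{k-1}b_{k+1}\ge0$ by expanding, using $a_k b_{k-1}\ge a_{k+1}b_{k-2}$-type inequalities that themselves follow by induction on $k$. An alternative for the whole argument is the Newton/total-positivity route: the Toeplitz matrix of each factor is TP$_2$, and products of TP$_2$ Toeplitz matrices are TP$_2$.

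The main obstacle is the degenerate cases where some $G_k=0$, which make the ratio undefined. If $\rho>0$ or some $\alpha_i>0$, then $G_k>0$ for all $k$ and there is no problem. If $\bm\alpha=\bm 0$ and $\rho=0$, then $G_k=0$ for $k\ge1$, and I would adopt the convention that the ratio is $+\infty$ or simply exclude this case. With positivity in hand, $G_k^2\ge G_{k-1}G_{k+1}$ rearranges to $G_{k-1}/G_k\le G_k/G_{k+1}$, which is the claim.
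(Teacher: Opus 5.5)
Your proposal is correct and is essentially the paper's proof. The paper also starts from the Poisson sequence $G_k^{(0)}=\rho^k/k!$ and convolves with one geometric factor at a time via $G_k^{(i)}=G_k^{(i-1)}+\alpha_iG_{k-1}^{(i)}$. It proves log-concavity by induction on $i$, using $(G_k^{(i)})^2-G_{k-1}^{(i)}G_{k+1}^{(i)}=G_k^{(i)}G_k^{(i-1)}-G_{k-1}^{(i)}G_{k+1}^{(i-1)}$ and a term-by-term expansion, which is exactly your self-contained fallback rather than a citation of Hoggar's theorem. Your explicit handling of positivity, including the degenerate case $\bm\alpha=\bm 0$, $\rho=0$, makes precise what the paper covers with a brief continuity remark.
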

\begin{proof}{Proof.}
Let $G_k^{(0)}=\rho^k/k!$ and recursively $G_k^{(i)}=G_k^{(i-1)}+\alpha_iG_{k-1}^{(i)}$ for $i=1,\ldots,N$, so that $G_k^{(N)}=G_k(\bm\alpha,\rho)$. We show by induction on $i$ that $G_{k-1}^{(i)}/G_k^{(i)}$ is non-decreasing in $k$. For $i=0$, $G_{k-1}^{(0)}/G_k^{(0)}=k/\rho$, which is increasing in $k$. Suppose the claim holds for $i-1$. We observe that
\begin{align*}
\left(G_k^{(i)}\right)^2-G_{k-1}^{(i)}G_{k+1}^{(i)}
&= G_k^{(i)}  G_k^{(i-1)} - G_{k-1}^{(i)} G_{k+1}^{(i-1)}\\
&=\alpha_i^kG_0^{(i-1)}G_k^{(i-1)}+\sum_{l=0}^{k-1}\alpha_i^l\left(G_k^{(i-1)}G_{k-l}^{(i-1)}-G_{k+1}^{(i-1)}G_{k-1-l}^{(i-1)}\right)\geq0.
\end{align*}
The first equality adds and subtracts the common term $\alpha_iG_{k-1}^{(i)}G_k^{(i)}$ and uses the recursion. The second equality follows from $G_k^{(i)}=\sum_{l=0}^k\alpha_i^lG_{k-l}^{(i-1)}$ and $G_{k-1}^{(i)}=\sum_{l=0}^{k-1}\alpha_i^lG_{k-1-l}^{(i-1)}$. The inequality follows from the  induction hypothesis: $G_{k-l}^{(i-1)}/G_{k-1-l}^{(i-1)}\geq G_{k+1}^{(i-1)}/G_k^{(i-1)}$. Thus, $G_{k-1}^{(i)}/G_k^{(i)}\leq G_k^{(i)}/G_{k+1}^{(i)}$, proving the induction. Taking $i=N$ proves that $G_{k-1}(\bm\alpha, \rho)/G_k(\bm\alpha,\rho)$ is non-decreasing in $k$. The result extends to $\rho=0$ by continuity.  \hfill \Halmos 
\end{proof}


We now prove Lemma~\ref{lem:availability-ratio-property}. Let $G_{-1}(\alpha,\rho)=0 $. For any $k$, the partial derivatives of $ G_k(\bm\alpha, \rho)$  are given by 
\begin{align}\label{eq:basic-derivatives}
\frac{\partial G_k(\bm\alpha, \rho)}{\partial \rho }=G_{k-1}(\bm\alpha,\rho), \qquad \frac{\partial G_k(\bm\alpha, \rho)}{\partial \alpha_i}=\sum_{l=1}^k\alpha_i^{l-1}G_{k-l}(\bm\alpha, \rho ), \ i \in [N] .
\end{align}

(i) Recall that $ R(\bm\alpha, \rho)  = \frac{G_{C-1}(\bm\alpha, \rho)}{G_C(\bm\alpha, \rho)}  $. For any $ \delta > 0 $, direct computation shows that
\begin{align*}
R(\bm\alpha,\rho+\delta)-R(\bm\alpha,\rho)=\int_{\rho}^{\rho+\delta}\frac{\partial R(\bm\alpha,t)}{\partial t}\,dt=\int_{\rho}^{\rho+\delta}\frac{G_{C-2}(\bm\alpha,t) G_C(\bm\alpha,t) - (G_{C-1}(\bm\alpha,t))^2}{(G_C(\bm\alpha,t))^2}\,dt\leq0,
\end{align*}
where the inequality follows from Lemma~\ref{lem:ratio-monotonicity} that $G_{C-2}/G_{C-1}\leq G_{C-1}/G_C$. 

(ii) For any $ i \in [N] $ and  $ 0 \leq \delta \leq \rho $, applying \eqref{eq:basic-derivatives} implies that 
\begin{align*}
&R(\bm\alpha+\delta\bm e_i,\rho-\delta)-R(\bm\alpha,\rho) \\
=&\int_0^\delta \frac{\partial R(\bm\alpha+t\bm e_i,\rho-t)}{\partial\alpha_i}-\frac{\partial R(\bm\alpha+t\bm e_i,\rho-t)}{\partial\rho} \; dt\\
=& \int_0^\delta \frac{\sum_{l=2}^C(\alpha_i + t)^{l-1} \left[  G_C(\bm\alpha + t\bm e_i,\rho-t) G_{C-1-l}(\bm\alpha + t\bm e_i,\rho-t) - G_{C-l}(\bm\alpha + t\bm e_i,\rho-t)G_{C-1}(\bm\alpha+t\bm e_i,\rho-t) \right]}{ (G_C(\bm\alpha + t\bm e_i, \rho-t))^2}dt\\
\leq & 0,
\end{align*}
where the last inequality follows from Lemma~\ref{lem:ratio-monotonicity} that $G_{C-l - 1}/G_{C- l  }\leq G_{C-1}/G_C$.

(iii) For any $i,j\in[N]$ with $\alpha_i\geq\alpha_j$ and any $0\leq\delta\leq\alpha_j$, applying \eqref{eq:basic-derivatives} implies that 
\begin{align*}
&R(\bm\alpha+\delta\bm e_i-\delta\bm e_j,\rho)-R(\bm\alpha,\rho)\\
=&\int_0^\delta\left(\frac{\partial R(\bm\alpha+t\bm e_i-t\bm e_j,\rho)}{\partial\alpha_i}-\frac{\partial R(\bm\alpha+t\bm e_i-t\bm e_j,\rho)}{\partial\alpha_j}\right)dt\\
=&\int_0^\delta \frac{\sum_{l=2}^C \left( (\alpha_i+t)^{l-1}-(\alpha_j-t)^{l-1} \right)  G_C(\bm\alpha + t\bm e_i - t\bm e_j,\rho) G_{C-1-l}(\bm\alpha + t\bm e_i - t\bm e_j, \rho  ) }{( G_C(\bm\alpha + t\bm e_i - t\bm e_j,\rho) )^2 } \; dt\\
& -  \int_0^\delta \frac{\sum_{l=2}^C \left( (\alpha_i+t)^{l-1}-(\alpha_j-t)^{l-1} \right)   G_{C-l}(\bm\alpha + t\bm e_i - t\bm e_j,\rho) G_{C-1}(\bm\alpha + t\bm e_i - t\bm e_j,\rho) }{( G_C(\bm\alpha + t\bm e_i - t\bm e_j,\rho) )^2 } \; dt  \leq 0,
\end{align*}
where the inequality follows from $ \alpha_i \geq \alpha_j $ and  Lemma~\ref{lem:ratio-monotonicity} that $G_{C-l - 1}/G_{C- l  }\leq G_{C-1}/G_C$.  
\hfill \Halmos  \\

\end{proof}

\begin{proof}{\textbf{Proof of Lemma~\ref{lem: solution}}.}
Start from any feasible $(\bm\alpha,\rho)$ of \eqref{min: ratio}. If $\sum_{i=1}^N\alpha_i+\rho<C$, increase $\rho$ until the constraint becomes binding. By Lemma~\ref{lem:availability-ratio-property}(i), this cannot increase $R(\bm\alpha,\rho)$. Thus, we restrict attention to solutions satisfying $\sum_{i=1}^N\alpha_i+\rho=C$. Second, suppose $0<\alpha_j\leq\alpha_i<1$ for some $i,j\in[N]$. Increase $\alpha_i$ and decrease $\alpha_j$ by the same amount until either $\alpha_i=1$ or $\alpha_j=0$. By Lemma~\ref{lem:availability-ratio-property}(iii), this operation cannot increase $R(\bm\alpha,\rho)$. Repeating this operation leaves at most one $\alpha_i$ strictly between zero and one. Finally, as long as $\rho>0$ and some $\alpha_i<1$, decrease $\rho$ and increase $\alpha_i$ by the same amount until either $\rho=0$ or $\alpha_i=1$. By Lemma~\ref{lem:availability-ratio-property}(ii), this operation cannot increase $R(\bm\alpha,\rho)$. Apply this operation first to the possible fractional $\alpha_i$ and then to those equal to zero. If $\rho=0$, then $\sum_{i=1}^N\alpha_i=C$. Since at most one $\alpha_i$ is fractional and the sum is an integer, every $\alpha_i$ must belong to $\{0,1\}$, with exactly $C$ of them equal to one. This requires $N\geq C$. If instead all $\alpha_i$ reach one while $\rho>0$, then $\rho=C-N>0$, so $N<C$. Therefore, in either case, there exists an optimal solution satisfying $\rho=C-\min\{N,C\}$, $\sum_{i=1}^N\alpha_i=\min\{N,C\}$, and $\alpha_i\in\{0,1\}$ for all $i\in[N]$.
\hfill \Halmos \\

\end{proof}

\begin{proof}{\textbf{Proof of Lemma~\ref{lem:tightness}}.} 
Consider a directed cycle $\A=\{(1,2),\ldots,(N-1,N),(N,1)\}$. For $(i,j)\in\A$, let $\Lambda_{ij}=1$ and $H_{ij}$ be exponential with mean $ \frac{1}{\epsilon} $. For simplicity, we define $m=\min\{N,C\}$ and  denote $Q_C(\bm\alpha, \rho)= \frac{1}{R(\bm\alpha,\rho)}= \frac{G_C(\bm\alpha, \rho)}{G_{C-1}(\bm\alpha,\rho)}$, $\hat{\bm\alpha}:=(\bm 1_m,\bm 0_{N-m})$, and $ \hat{\rho} = C - m $. Note that $ Q_C(\bm\alpha, \rho) $ is convex by Theorem~\ref{thm:convex-static-program} and $ Q_C(\hat{\bm\alpha}, \hat\rho) = 1/\gamma_{C,N}  $ from \eqref{eq: gamma}. We define $a:=\frac{\partial Q_C}{\partial\alpha_i}(\hat{\bm\alpha}, \hat\rho)$ for $i\leq m$ and $b:=\frac{\partial Q_C}{\partial\rho}(\hat{\bm\alpha},\hat\rho)$. By symmetry of $ Q_C(\bm \alpha, \rho) $, $a$ is the same for all $i\leq m$. Moreover, Lemma~\ref{lem:availability-ratio-property} implies $a \geq b \geq 0$.  Since $Q_C$ is homogeneous of degree one, differentiating $Q_C(t\hat{\bm\alpha}, t\hat\rho) = tQ_C(\hat{\bm\alpha}, \hat\rho)$ with respect to $t$ at $t=1$ yields $Q_C(\hat{\bm\alpha}, \hat\rho) = \sum_{i=1}^N\frac{\partial Q_C}{\partial\alpha_i}(\hat{\bm\alpha}, \hat\rho)\hat\alpha_i + \frac{\partial Q_C}{\partial\rho}(\hat{\bm\alpha}, \hat\rho)\hat\rho = ma + b\hat\rho =\frac{1}{\gamma_{C,N}}>0$, which further implies $ a > 0 $.  For the first $m$ arcs along the cycle, the revenue rate functions are $ r^{ij}(\lambda)= a + \frac{bN}{m\epsilon}\lambda$, which is induced by a shifted and scaled version of the limiting triangular distribution studied in \cite{jin2019tight} with $p^{ij}(\lambda)= \frac{b N}{m\epsilon} + \frac{a}{\lambda}$. For the remaining arcs, the revenue rates functions are $r^{ij}(\lambda)= \epsilon \lambda (1 - \lambda) $. Since $Q_C(\bm \alpha, \rho)$ is convex, we consider its tangent hyperplane at $(\hat{\bm\alpha},\hat\rho)$: 
\begin{align}
    Q_C(\bm\alpha,\rho)\geq 
      Q_C(\hat{\bm\alpha},\hat\rho)+ a\sum_{i=1}^m(\alpha_i-1) + \sum_{i=m+1}^N\frac{\partial Q_C}{\partial\alpha_i}(\hat{\bm\alpha},\hat\rho)\alpha_i + b(\rho-\hat\rho) 
      \geq& a\sum_{i=1}^m\alpha_i+b\rho. \label{eq: a-b-lower}
\end{align}
The inequality follows from $ Q_C(\hat{\bm\alpha}, \hat\rho) = ma + b \hat{\rho} $ and $\frac{\partial Q_C}{\partial\alpha_i}(\hat{\bm\alpha},\hat\rho)\geq 0$ by Lemma~\ref{lem:availability-ratio-property}.

Now, consider any static policy $\bm\lambda$. Following the formulation \eqref{opt:convex-program} in Section~\ref{sec:opt-static}, let $\bm\omega$ and $\bm z$ be the corresponding variables. On the directed cycle, the constraint $\sum_{j:(i,j)\in\A}z_{ij}=\sum_{k:(k,i)\in\A}z_{ki}$ implies that all $z_{ij}$ are equal, denoted by $z$. Hence, the total travel load $\rho = \sum_{(i,j)\in\A} z_{ij}/ \epsilon=Nz/\epsilon$. Then, the revenue contribution from the first $m$ arcs is $ \frac{\sum_{i=1}^m\omega_i\left(a+\frac{bN}{m\epsilon}\frac{z}{\omega_i}\right)}{Q_C(\bm\omega,\rho)}= \frac{a\sum_{i=1}^m \omega_i + b\rho}{Q_C(\bm\omega,\rho)}
\leq1 $, where the inequality follows from \eqref{eq: a-b-lower} that $Q_C(\bm\omega,\rho)\geq a\sum_{i=1}^m\omega_i+b\rho$. For each of the remaining $N-m$ arcs, $r^{ij}(\lambda) \leq \frac{\epsilon}{4}$. We conclude that
\begin{align*}
\R^{sta}(\bm\lambda)\leq 1 + \frac{(N-m)\epsilon}{4} \Longrightarrow \R^{sta*}\leq1+\frac{(N-m)\epsilon}{4}.
\end{align*}

We next construct a dynamic policy whose revenue converges to $1/\gamma_{C,N}$ when $ \epsilon \to 0 $. Consider the dynamic policy  $\hat{\lambda}_{\bm s}^{ij}=\epsilon^2$ if $i\leq m$ and $x_i=1$, and $\hat{\lambda}_{\bm s}^{ij}=1$ otherwise whenever $x_i\geq1$. Since this policy releases vehicles slowly when only one vehicle remains, and quickly when there are multiple vehicles, the first $m$ locations each tend to have exactly one vehicle. Note that on the directed cycle, every arc has the same throughput, denoted by $ \hat{v}$. Thus, the total travel load is $\rho(\hat{\bm\lambda}) = N \hat{v} /\epsilon$. Also, the last $N-m$ arcs generate zero revenue under $\hat{\bm\lambda}$ since $r^{ij}(1)=0$. The revenue under $ \hat{\bm\lambda} $ is given by
\begin{align*}
  \R (\hat{\bm\lambda} ) = \int_{\bm s \in \mathcal{S}_{C,N}}  \sum_{i=1 }^m \mathbf{1}\{x_i\geq1\} (a + \frac{b N}{m \epsilon} \hat{\lambda}^{i,i+1}_{\bm s} ) \; d \Pi^{\hat{\bm \lambda}}( \bm s) = a \sum_{i=1}^m \alpha_i(\hat{\bm\lambda}) + \frac{bN}{m\epsilon}\sum_{i=1}^m \hat{v} = a \sum_{i=1}^m \alpha_i(\hat{\bm\lambda}) + b \rho(\hat{\bm \lambda}).
\end{align*}
Since the effective arrival rate at each location depends only on the number of vehicles at that location, the induced system has the queue-dependent product form of \citet{reiser1975queuing}. The steady-state probability of each count state $\bm x\in\mathcal X_{C,N}$ can be written as
\begin{align*}
\mathbb P_{\bm x} =\frac{\prod_{i=1}^m \phi_\epsilon(x_i)\prod_{i=m+1}^N\epsilon^{x_i}\prod_{(i,j)\in\A}1/x_{ij}!}{\sum_{\bm x'\in\mathcal X_{C,N}}\prod_{i=1}^m\phi_\epsilon(x_i')\prod_{i=m+1}^N\epsilon^{x_i'}\prod_{(i,j)\in\A}1/x_{ij}'!},\qquad \phi_\epsilon(0)=1,\quad \phi_\epsilon(x)=\epsilon^{x-2}\ \text{for } x\geq1.
\end{align*}
Let $ \mathcal{X}_{C,N}(m) $ be the subset of $ \mathcal{X}_{C,N} $ satisfying $x_i=1$ for $i \in [m]$ and $x_i=0$ for $i=m+1,\ldots,N$ with $\sum_{(i,j)\in\A}x_{ij}=C-m$. We observe that $ \lim_{\epsilon \to 0 } \mathbb{P}_{\bm x} > 0 $ for $ \bm{x} \in \mathcal{X}_{C,N}(m) $ and $ \lim_{\epsilon \to 0 } \mathbb{P}_{\bm x} = 0 $ otherwise. To see this, for any $\bm x\in \mathcal X_{C,N}$, collecting the factors of $\epsilon$ in the numerator of $\mathbb P_{\bm x}$ yields that $ \prod_{i=1}^m \phi_\epsilon(x_i) \prod_{i=m+1}^N \epsilon^{x_i}
=\epsilon^{\sum_{i=1}^m (x_i - 2) \mathbf{1} \{x_i \geq 1\} + \sum_{i=m+1}^N x_i} =\epsilon^{\sum_{i=1}^N x_i-2\sum_{i=1}^m\mathbf{1}\{x_i\geq1\}} \leq \epsilon^{-m} $, where the inequality holds with equality for $ \bm x \in \mathcal{X}_{C,N}(m) $. Then, for $i \in [m] $, $ 1 \geq \lim_{\epsilon \to 0} \alpha_i(\hat{\bm \lambda}) \geq \lim_{\epsilon \to 0} \sum_{\bm x \in \mathcal{X}_{C,N}(m)  }\mathbb P_{\bm x} =1 $. Moreover, since count states in $ \mathcal{X}_{C,N}(m) $ have  $C-m$ vehicles traveling, Little's law implies $ \lim_{\epsilon \to 0} \rho(\hat{\bm \lambda}) = C-m$. Thus, we conclude that 
\begin{align*}
\gamma_{C,N}
\leq \liminf_{\epsilon\to0}\frac{\R^{sta*}}{\R^*}
\leq \limsup_{\epsilon\to0}\frac{1+(N-m)\epsilon/4}{\R^*} 
\leq \lim_{\epsilon\to0}\frac{1+(N-m)\epsilon/4}{\R(\hat{\bm\lambda})} 
=\gamma_{C,N}.
\end{align*}
The first inequality holds by the lower bound in Theorem~\ref{thm:main-guarantee}. The second inequality follows from $\R^{sta*} \leq 1 + \frac{(N-m)\epsilon}{4}$. The last equality holds by $  \lim\limits_{\epsilon \to 0 }  \R(\hat{\bm\lambda}) =\lim_{\epsilon \to 0 } a \sum_{i=1}^m \alpha_i(\hat{\bm\lambda}) + b \rho(\hat{\bm \lambda}) = ma + b(C-m) = \frac{1}{\gamma_{C,N}}  $.  \hfill \Halmos \\

\end{proof}

\begin{proof}{\textbf{Proof of Lemma~\ref{lem: lowerbound-gamma}}.} We first show that $ \gamma_{C,N} \geq C/(2C -1) $. Suppose that $N \geq C $. Lemma~\ref{lem: solution} shows that $\hat{\rho}=0$, which implies that $ \gamma_{C,N} = R(\hat{\bm\alpha},0) =  \binom{2C-2}{C-1} / \binom{2C-1}{C} = \frac{C}{2C-1} $ as only the $k=0$ terms are non-zero. Suppose now that $N<C$. Lemma~\ref{lem: solution} implies that $\hat{\alpha}_i=1$ for all $i\in[N]$ and $\hat{\rho}=C-N$. Applying \eqref{eq:G_C}, we have
\begin{align*}
C \cdot G_C(\hat{\bm\alpha},\hat{\rho})
&=\sum_{k=0}^{C}\frac{(C-N)^k}{k!}\big((C-k)+k\big)\binom{N+C-k-1}{C-k} \\
& = \sum_{k=0}^{C-1}\frac{(C-N)^k}{k!} (N + C -k -1) \binom{N+C-k-2}{C-k-1} + \sum_{k=1}^{C} \frac{(C-N)^{k}}{(k-1)!} \binom{N + C -k -1}{C - k}     \\
&=(2C-1)G_{C-1}(\hat{\bm\alpha},\hat{\rho}) - \sum_{k=1}^{C-1} \frac{(C-N)^k}{(k-1)!} \binom{N+C-k-2}{C-k-1}
 \leq (2C-1) G_{C-1}(\hat{\bm\alpha},\hat{\rho}).
\end{align*}
The second equality follows from $(C-k)\binom{N+C-k-1}{C-k}=(N+C-k-1)\binom{N + C-k-2}{C-k-1}$ for $ k \leq C-1 $. The third equality follows from  $N + C - k - 1 = 2C -1 - (C-N) - k $ and canceling out identical terms. Therefore, $ \gamma_{C,N} = \frac{G_{C-1}(\hat{\bm\alpha},\hat{\rho})}{G_C(\hat{\bm\alpha}, \hat{\rho})}\geq\frac{C}{2C-1} $.  

Next, we prove that $ \gamma_{C,N}\geq \frac{\sqrt{C}}{\sqrt{C}+\sqrt{\min\{N,C\}}} $. According to \eqref{eq:G_C} and Lemma~\ref{lem: solution} that $\hat{\rho}=C-\min\{N,C\}$, $\sum_{i=1}^N \hat{\alpha}_i = \min\{N,C\}  $, $\hat{\alpha}_i \in \{0,1\}$, we have that $  G_0(\hat{\bm\alpha},\hat{\rho}) :=1 $, and $ G_i(\hat{\bm\alpha}, \hat{\rho}) = \sum_{k=0}^{i}\frac{\hat{\rho}^k}{k!} \binom{\min\{N,C\}+i-k-1}{i-k}, \ i \in [C] $. We further note that
\begin{align}
C \cdot G_C(\hat{\bm\alpha},\hat{\rho})
=&\sum_{k=0}^{C}\frac{\hat{\rho}^k}{k!}(C-k) \binom{\min\{N,C\}+C-k-1}{C-k} + \sum_{k=0}^{C}\frac{\hat{\rho}^k}{k!}k \binom{\min\{N,C\}+C-k-1}{C-k} \notag \\
=& \min\{N,C\} \sum_{k=0}^{C-1}\frac{\hat{\rho}^k}{k!}\binom{\min\{N,C\}+C-k-1}{C-k-1} + \sum_{k=1}^C \frac{\hat{\rho}^k}{(k-1)!} \binom{\min\{N,C\}+C-k-1}{C-k} \notag \\
=& \min\{N,C\}  \sum_{k=0}^{C-1}\frac{\hat{\rho}^k}{k!} \sum_{i=k}^{C-1}\binom{\min\{N,C\} + i-k-1}{i-k}  + \hat{\rho} \sum_{k=0}^{C-1} \frac{\hat{\rho}^k}{k!} \binom{\min\{N,C\}+C-k-2}{C- 1 - k}  \notag \\
=& \min\{N,C\}\sum_{i=0}^{C-1}G_i(\hat{\bm\alpha},\hat{\rho}) +  \hat{\rho}G_{C-1}(\hat{\bm\alpha},\hat{\rho}).  \label{eq: identity}
\end{align}
The first equality follows from $C=(C-k) + k$. The second equality follows from $ (C-k)\binom{\min\{N,C\} + C-k-1}{C-k}
=\min\{N,C\}\binom{\min\{N,C\} + C-k-1}{C-k-1}$. The third equality follows from the Hockey-stick identity $\binom{\min\{N,C\}+C-k-1}{C-k-1} = \sum_{i=k}^{C-1}\binom{\min\{N,C\}+ i - k - 1}{i-k}$. The last equality follows from the expressions of $G_i( \hat{\bm \alpha}, \hat{\rho} )$. By the monotonicity property in Lemma~\ref{lem:ratio-monotonicity}, for $i = 0,\ldots,C-1$, $ \frac{G_i(\hat{\bm\alpha}, \hat{\rho})}{G_C(\hat{\bm\alpha}, \hat{\rho})}
=\prod_{k=i+1}^{C} \frac{G_{k-1}(\hat{\bm\alpha}, \hat{\rho})}{G_k(\hat{\bm\alpha}, \hat{\rho})}
\leq \left(\frac{G_{C-1}(\hat{\bm\alpha},\hat{\rho})}{G_C(\hat{\bm\alpha},\hat{\rho})}\right)^{C-i}
=\gamma_{C,N}^{\,C-i} $. Then, applying this inequality and  dividing \eqref{eq: identity} by $G_C(\hat{\bm\alpha}, \hat{\rho})$  shows that
\begin{align*}
C \leq \min\{N,C\}\sum_{i=0}^{C-1}\gamma_{C,N}^{\,C-i} + (C-\min\{N,C\})\gamma_{C,N}  \leq \frac{\min\{N,C\}\gamma_{C,N}}{1-\gamma_{C,N}} + (C-\min\{N,C\})\gamma_{C,N}.
\end{align*}
Rearranging terms yields that $ (C-\min\{N,C\})\gamma_{C,N}^2 - 2C \gamma_{C,N} + C\leq0 $. Since $\gamma_{C,N}<1$, solving the quadratic inequality gives $ \gamma_{C,N}\geq\frac{\sqrt{C}}{\sqrt{C}+\sqrt{\min\{N,C\}}} $. Together, we conclude that $ \gamma_{C,N} \geq \frac{C}{C+\min\{C-1,\sqrt{CN}\}} = \max\{ \frac{C}{2C-1}, \frac{\sqrt C}{\sqrt{C} +   \sqrt{\min\{N,C\} } } \}  $.  \hfill \Halmos \\
\end{proof}

\begin{proof}{\textbf{Proof of Corollary~\ref{prop: dominate}}.}

By Lemma \ref{lem: lowerbound-gamma}, it suffices to show that $ \frac{\sqrt{C}}{\sqrt{C}+\sqrt{\min\{N,C\}}} > B_{C,N} = 1 + \frac{N-1}{2C}+\frac{1}{2(N+C)}-\sqrt{\left(\frac{N-1}{2C}+\frac{1}{2(N+C)}\right)^2+\frac{N-1}{C}+\frac{1}{N+C}}  $.  For any $x\geq0$ and $s\in(0,1]$, we note that  $ 1+\frac{x}{2}-\sqrt{\frac{x^2}{4}+x} < s   \Leftrightarrow  x > \frac{(1-s)^2}{s}$. $B_{C,N}$ is obtained by setting $ x=\frac{N-1}{C}+\frac{1}{N+C} = \frac{N(N+C-1)}{C(N+C)}$.  If $N < C$, then $ (1-\frac{\sqrt{C}}{\sqrt{C}+\sqrt{N}})^2/\frac{\sqrt{C}}{\sqrt{C}+\sqrt{N}} = \frac{N}{C+\sqrt{CN}} <  \frac{N(N+C-1)}{C(N+C)} $ as  $(N+C-1)\sqrt{CN} > C$. If $N\geq C$, then $ \sqrt{C}/(\sqrt{C}+\sqrt{\min\{N,C\}}) = \frac{1}{2} < \frac{N(N+C-1)}{C(N+C)} $. Therefore, we conclude that $ \gamma_{C,N} \geq \frac{\sqrt{C}}{\sqrt{C} + \sqrt{\min\{N,C\}}}
> B_{C,N}$.  \hfill \Halmos \\
\end{proof}


\begin{proof}{\textbf{Proof of Theorem~\ref{thm:zero-travel}}.} 

We first prove that $ \inf_{\Omega^0_{C,N}} \frac{\R^{\tilde{\bm\lambda}}}{\R^*} \geq \frac{C}{C+\min\{C,N\}-1} $. Since no vehicle is in transit when all travel times are zero, $\rho(\bm\lambda^*)=0$ in this case. Lemma~\ref{lem: RevRatio_lower} thus yields that $ \frac{\R^{\tilde{\bm\lambda}}}{\R^*} \geq R(\bm{\alpha}(\bm \lambda^*), 0) $. 

Similar to Lemma~\ref{lem: solution}, we claim that there exists an optimal solution to $ \min_{\bm\alpha\in[0,1]^N, \ \sum_{i=1}^N \alpha_i \leq C } \R(\bm\alpha,0) $  with $  \sum_{i=1}^N\hat\alpha_i = \min\{N,C\} $ and $ \hat\alpha_i \in\{0,1\} $ for all $i \in [N] $. To see this, we first note that increasing any $ \alpha_i $ cannot increase $R(\bm\alpha,0)$. This is because for any feasible $\delta\geq0$, Lemma~\ref{lem:availability-ratio-property}(ii) and (i) imply that $ R(\bm\alpha + \delta\bm e_i,0) \leq R(\bm\alpha,\delta) \leq R(\bm\alpha,0) $. Then, starting from any feasible $\bm\alpha$, we can increase $\alpha_i$ until either $\sum_{i=1}^N\alpha_i=C$ or $\alpha_i=1$ for all $i\in[N]$. Therefore, we restrict attention to solutions satisfying $\sum_{i=1}^N\alpha_i=\min\{N,C\}$. Next, suppose $0<\alpha_j\leq\alpha_i<1$ for some $i,j\in[N]$. Increase $\alpha_i$ and decrease $\alpha_j$ by the same amount until either $\alpha_i=1$ or $\alpha_j=0$. By Lemma~\ref{lem:availability-ratio-property}(iii), this operation cannot increase $R(\bm\alpha,0)$. Repeating this operation leaves at most one $\alpha_i$ strictly between zero and one. Since $\sum_{i=1}^N\alpha_i=\min\{N,C\}$ is an integer, the remaining $\alpha_i$ cannot be fractional. Thus, there exists an optimal solution with $\sum_{i=1}^N\hat\alpha_i=\min\{N,C\}$ and $\hat\alpha_i\in\{0,1\}$ for all $i\in[N]$.
 
Combining the above observation with the proof of Theorem~\ref{thm:main-guarantee}, we conclude that 
\begin{align*}
 \frac{\R^{sta*}}{\R^*} \geq    \frac{\R^{\tilde{\bm\lambda}}}{\R^{*}} \geq   R(\bm{\alpha}(\bm \lambda^*), 0  ) \geq  R(\bm{\hat \alpha}, 0  ) 
     =  \frac{  \binom{\min\{N,C\}+ C-2}{C-1}}{\binom{\min\{N,C\}+C -1}{C}}   
     = \frac{C}{C +  \min\{ C, N \} - 1 },
\end{align*}
where the second equality follows from the expression of $ R(\bm{\hat{\alpha}}, 0 ) $, \eqref{eq:G_C} and weak compositions.  

For the tightness result, we provide an example with the matching lower bound, i.e., $ \frac{ \R^{sta*}}{\R^*} =  \frac{C}{C + \min\{C,N\}  -1} $. We consider the same directed cycle as in Lemma~\ref{lem:tightness} with $ \Lambda_{ij} =1 $ and zero travel times for all trips. Recall that $m=\min\{C,N\}$ and $Q_C(\bm\alpha, \rho)= 1/R(\bm\alpha,\rho)=G_C(\bm\alpha, \rho)/G_{C-1}(\bm\alpha,\rho)$. In this case, $\hat{\bm\alpha}=(\bm 1_m, \bm 0_{N-m})$ and $ \hat{\rho}=0 $. We define $a:=\frac{\partial Q_C}{\partial\alpha_i}(\hat{\bm\alpha},0)$ for $i\in[m]$. For the first $m$ arcs along the cycle, let $r^{ij}(\lambda)=a$, and for the remaining arcs let $r^{ij}(\lambda)= \delta\lambda(1-\lambda)$. We also have $ \frac{C + \min\{C,N\} -1 }{C} = \frac{C + m -1 }{C}  = Q_C(\hat{\bm\alpha},0) =  Q_C(\hat{\bm\alpha}, \hat\rho) = m a  $. Since $  \rho \equiv  0  $ with zero travel times, the same supporting-hyperplane argument as in Lemma~\ref{lem:tightness} yields that $ Q_C(\bm\alpha, 0) \ge a \sum_{i=1}^m\alpha_i$. Then, for any static policy $\bm\lambda$, we have that  $ R^{sta}(\bm\lambda) \leq \frac{a\sum_{i=1}^m \omega_i}{Q_C(\bm\omega, 0)} + \frac{(N-m) \delta}{4} \leq 1 + \frac{(N-m) \delta}{4}   $, and hence $\R^{sta*}\leq1+\frac{(N-m)\delta}{4}$.

Now consider the same dynamic policy as in Lemma 5,  reparameterized by $\delta$: $\hat{\lambda}_{\bm s}^{i,i+1}=\delta$ if $i\le m$ and $x_i=1$, and $\hat{\lambda}_{\bm s}^{i,i+1}=1$ otherwise whenever $x_i \ge 1$. The same queue-dependent product-form argument implies that, as $ \delta \to 0 $, all first $ m $ locations are nonempty with probability approaching one. The remaining $N-m$ arcs generate zero revenue under $\hat{\bm\lambda}$ since $r^{ij}(1)=0$. Therefore, $ \lim_{\delta \to 0} R(\hat{\bm \lambda}) = \lim_{\delta \to 0} a \sum_{i=1}^m \alpha_i (\hat{\bm \lambda}) = ma = \frac{C + m - 1}{C}  $. We conclude that $ \frac{C}{C+m-1}
\leq \liminf_{\delta\to0}\frac{\R^{sta*}}{\R^*}
\leq \limsup_{\delta\to0}\frac{1+(N-m)\delta/4}{\R^*}
\leq \lim_{\delta\to0} \frac{1+(N-m)\delta/4}{\R(\hat{\bm\lambda})}
=\frac{C}{C+m-1} $, which proves tightness. \hfill \Halmos \\
\end{proof}

\begin{proof}{\textbf{Proof of Lemma~\ref{lem:redirection-common-ratio}}.}
Under a static policy $(\bm\lambda,\bm\eta)$, the induced system is a closed BCMP network consisting of location nodes, trip nodes, and redirection nodes. Recall that $\lambda_i=\sum_{j:(i,j)\in\A}\lambda^{ij}$.  Let $w_i$, $w_{ij}$, and $q_{ij}$ denote a solution to the following traffic equations for these three types of nodes:
\begin{align*} 
w_{ij} = w_i \frac{\lambda^{ij}}{\lambda_i},\; (i,j)\in \A, \quad q_{ij}=\eta^{ij}\sum_{k:(k,i)\in\A }w_{ki},\; i,j\in[N],\quad w_i=\left(1-\sum_{j\in[N]} \eta^{ij}\right) \sum_{k:(k,i)\in\A}w_{ki}+\sum_{k\in[N]}q_{ki},\ i\in[N]. \end{align*}
We define $x_{ij}^r$ as the number of vehicles traveling on redirection $(i,j)$, the BCMP product form implies that the steady-state probability of a count state is proportional to
\begin{align*} \prod_{i=1}^N\left(\frac{w_i}{\lambda_i}\right)^{x_i}\prod_{(i,j)\in\A}\frac{1}{x_{ij}!}\left(\frac{w_{ij}}{\mu^{ij}}\right)^{x_{ij}}\prod_{i,j\in[N]}\frac{1}{x_{ij}^r!}\left(\frac{q_{ij}}{\nu^{ij}}\right)^{x_{ij}^r}. \end{align*}
We next identify a solution to these traffic equations under $(\tilde{\bm\lambda},\tilde{\bm\eta})$. Let $\tilde\lambda_i=\sum_{j:(i,j)\in\A}\tilde\lambda^{ij}$. From \eqref{eq:rebalancing-static-construction}, consider $w_i=\tilde\lambda_i \alpha_i(\bm\lambda^*, \bm\eta^*)$, $w_{ij}=v_{ij}(\bm\lambda^*, \bm\eta^*)$, and $q_{ij}=u_{ij}(\bm\lambda^*, \bm\eta^*)$. The first two traffic equations follow directly from
\begin{align*} 
w_i\frac{\tilde\lambda^{ij}}{\tilde\lambda_i}=\alpha_i(\bm\lambda^*,\bm\eta^*)\tilde\lambda^{ij}=v_{ij}(\bm\lambda^*,\bm\eta^*),\qquad \tilde\eta^{ij}\sum_{k:(k,i)\in\A}v_{ki}(\bm\lambda^*,\bm\eta^*)=u_{ij}(\bm\lambda^*,\bm\eta^*). \end{align*}
Moreover, stationarity of $(\bm\lambda^*,\bm\eta^*)$ implies  $\sum_{j:(i,j)\in\A}v_{ij}(\bm\lambda^*,\bm\eta^*)+\sum_{j\in[N]}u_{ij}(\bm\lambda^*,\bm\eta^*)=\sum_{k:(k,i)\in\A}v_{ki}(\bm\lambda^*,\bm\eta^*)+\sum_{k\in[N]}u_{ki}(\bm\lambda^*,\bm\eta^*)$. Hence, we obtain that
\begin{align*} 
\left(1-\sum_{j\in[N]}\tilde\eta^{ij}\right)\sum_{k:(k,i)\in\A}v_{ki}(\bm\lambda^*,\bm\eta^*) + \sum_{k\in[N]}u_{ki}(\bm\lambda^*,\bm\eta^*)
=\sum_{j:(i,j)\in\A}v_{ij}(\bm\lambda^*,\bm\eta^*)
=\tilde\lambda_i\alpha_i(\bm\lambda^*,\bm\eta^*), 
\end{align*}
which verifies the third traffic equation. Therefore, the product-form weights under $(\tilde{\bm\lambda}, \tilde{\bm\eta})$ are $\alpha_i(\bm\lambda^*, \bm\eta^*)$ for location $i$, $v_{ij}(\bm\lambda^*, \bm\eta^*)/\mu^{ij}$ for trip $(i,j)$, and $u_{ij}(\bm\lambda^*, \bm\eta^*)/\nu^{ij}$ for redirection $(i,j)$. Applying the same argument as in Lemma~\ref{lem: RevRatio_lower} and using the multinomial theorem to aggregate the trip and redirection nodes shows that
\begin{align*} 
\frac{\alpha_i(\tilde{\bm\lambda}, \tilde{\bm\eta})}{\alpha_i(\bm\lambda^*, \bm\eta^*)} = \frac{G_{C-1}(\bm\alpha(\bm\lambda^*, \bm\eta^*),\rho(\bm\lambda^*, \bm\eta^*))}{G_C(\bm\alpha(\bm\lambda^*, \bm\eta^*), \rho(\bm\lambda^*,\bm\eta^*))}, \quad i\in[N]. 
\end{align*}
Finally, using \eqref{eq:rebalancing-static-construction} and the identities that $ v_{ij}(\tilde{\bm\lambda}, \tilde{\bm\eta}) = \tilde\lambda^{ij} \alpha_i(\tilde{\bm\lambda},\tilde{\bm\eta}) $ and $ u_{ij}(\tilde{\bm\lambda}, \tilde{\bm\eta}) = \tilde\eta^{ij} \sum_{k:(k,i)\in\A} v_{ki}(\tilde{\bm\lambda}, \tilde{\bm\eta})  $,
\begin{align*} 
\frac{v_{ij}(\tilde{\bm\lambda},\tilde{\bm\eta})}{v_{ij}(\bm\lambda^*, \bm\eta^*)} = \frac{ \tilde{\lambda}^{ij} \alpha_i(\tilde{\bm\lambda}, \tilde{\bm\eta})}{ \tilde{\lambda}^{ij} \alpha_i(\bm\lambda^*, \bm\eta^*)}  =\frac{\alpha_i(\tilde{\bm\lambda}, \tilde{\bm\eta})}{\alpha_i(\bm\lambda^*, \bm\eta^*)},\quad 
\frac{u_{kl}(\tilde{\bm\lambda},\tilde{\bm\eta})}{u_{kl}(\bm\lambda^*,\bm\eta^*)} = \frac{\sum_{j:(j,k)\in\A}v_{jk}(\tilde{\bm\lambda}, \tilde{\bm\eta})}{\sum_{j:(j,k)\in\A}v_{jk}(\bm\lambda^*,\bm\eta^*)} = \frac{G_{C-1}(\bm\alpha(\bm\lambda^*, \bm\eta^*),\rho(\bm\lambda^*, \bm\eta^*))}{G_C(\bm\alpha(\bm\lambda^*,  \bm\eta^*),\rho(\bm\lambda^*, \bm\eta^*))},
\end{align*}
for the positive throughputs specified in the lemma, which completes the proof.
\hfill \Halmos
\end{proof}

\end{APPENDICES}

\newpage
\bibliographystyle{ormsv080}
\bibliography{Paper}

\end{document}